\newtheorem{theorem}{Theorem}[section]
\newtheorem*{thmmainsamplingone}{Theorem \ref{mainsamplingone}}
\newtheorem*{propmainsamplinglbone}{Proposition \ref{mainsamplinglbone}}
\newtheorem{lemma}[theorem]{Lemma}
\newtheorem{corollary}[theorem]{Corollary}
\newtheorem{proposition}[theorem]{Proposition}
\theoremstyle{definition}
\newtheorem{problem}{Problem}
\newtheorem*{remark}{Remark}
\numberwithin{equation}{section}
\mathchardef\hyphen="2D
\begin{document}
\allowdisplaybreaks
\title{Sharp bounds for max-sliced Wasserstein distances}
\author{March T.~Boedihardjo}
\address{Department of Mathematics, Michigan State University, East Lansing, MI 48824}
\email{boedihar@msu.edu}
\begin{abstract}
We obtain essentially matching upper and lower bounds for the expected max-sliced 1-Wasserstein distance between a probability measure on a separable Hilbert space and its empirical distribution from $n$ samples. By proving a Banach space version of this result, we also obtain an upper bound, that is sharp up to a log factor, for the expected max-sliced 2-Wasserstein distance between a symmetric probability measure $\mu$ on a Euclidean space and its symmetrized empirical distribution in terms of the operator norm of the covariance matrix of $\mu$ and the diameter of the support of $\mu$.
\end{abstract}
\keywords{Max-sliced Wasserstein, Projection robust Wasserstein}
\subjclass[2020]{60B11, 62G05, 62R20}
\maketitle
\section{Introduction}
Suppose that $\mu$ is a probability measure on $\mathbb{R}^{d}$ with $\int_{\mathbb{R}^{d}}\|x\|_{2}^{2}\,d\mu(x)<\infty$, where $\|\,\|_{2}$ is the Euclidean norm on $\mathbb{R}^{d}$. Let $X_{1},\ldots,X_{n}$ be i.i.d.~samples of $\mu$, i.e., $X_{1},\ldots,X_{n}$ are independent random vectors in $\mathbb{R}^{d}$ and each $X_{i}$ has distribution $\mu$. For a given $x\in\mathbb{R}^{d}$, the probability measure on $\mathbb{R}^{d}$ with an atom of mass $1$ is denoted by $\delta_{x}$. How many samples are needed so that the empirical distribution $\frac{1}{n}\sum_{i=1}^{n}\delta_{X_{i}}$ (which is a random probability measure on $\mathbb{R}^{d}$) is ``close" to $\mu$? Obviously the answer depends on the notion of ``close" we use. If we want the covariance matrix of $\frac{1}{n}\sum_{i=1}^{n}\delta_{X_{i}}$ to be close, in the operator norm, to the covariance matrix of $\mu$, it is already a very deep question of how many samples are needed, though by now, in some aspects, this question has been settled after a series of work \cite{Rudelson, Adamczak, Adamczak2, Romansc, SVCov, Mendelson, Koltchinskii, Tikhomirov, Zhivotovskiy, AZ}. For example, when $\mu$ is isotropic and supported on a Euclidean ball of radius $O(\sqrt{d})$, then $O(d\log d)$ samples suffice to accurately approximate the covariance matrix of $\mu$ \cite{Rudelson}. On the other hand, if we want $\frac{1}{n}\sum_{i=1}^{n}\delta_{X_{i}}$ and $\mu$ to be close in the Wasserstein distance, we need $n$ to be exponentially large in $d$ (see, e.g., \cite{Fournier}).

To circumvent this curse of dimensionality issue, in recent years, the notions of sliced Wasserstein distance \cite{Rabin, Bonneel, Carriere, Deshpande1, Kolouri1, Nadjahi, Wu}, max-sliced Wasserstein distance \cite{Deshpande2, Bartl1, Nietert, Olea} and projection robust Wasserstein distance \cite{Paty, Lin2020, Lin, WGX, NR} and other variants of the notion of sliced Wasserstein distance \cite{NHAm, Bonet, Quellmalz, NHEnergy} have been introduced, studied and used in various applications. For $p\geq 1$, the max-sliced $p$-Wasserstein distance between two probability measures $\mu$ and $\widetilde{\mu}$ on $\mathbb{R}^{d}$ is
\begin{equation}\label{maxsliceddef}
W_{p,1}(\mu,\widetilde{\mu}):=\sup_{v\in\mathbb{R}^{d},\,\|v\|_{2}=1}W_{p}(v_{\#}\mu,v_{\#}\widetilde{\mu}),
\end{equation}
where $v_{\#}\mu$ is the pushforward probability measure of $\mu$ by the map $\langle\cdot,v\rangle$, i.e., if $\mu$ is the distribution of a random vector $X$ in $\mathbb{R}^{d}$, then $v_{\#}\mu$ is the distribution of the random variable $\langle X,v\rangle$. The quantity $W_{p}(v_{\#}\mu,v_{\#}\widetilde{\mu})$ denotes the $p$-Wasserstein distance between the measures $v_{\#}\mu$ and $v_{\#}\widetilde{\mu}$ on $\mathbb{R}$. The sliced Wasserstein distance (which we do not study in this paper) is the notion where in (\ref{maxsliceddef}), we replace the supremum over $v$ by the integral of $W_{p}(v_{\#}\mu,v_{\#}\widetilde{\mu})^{p}$ over $v$ on the unit sphere and then take the $p$th root. The projection robust Wasserstein distance $W_{p,s}$ (which we also study in this paper) is the notion where in (\ref{maxsliceddef}), we take the $p$-Wasserstein distance between the pushforward measures of $\mu$ and $\widetilde{\mu}$ by a projection onto a subspace of a fixed dimension $s$ and then take supremum over all such subspaces. When $s=1$, this is the max-sliced Wasserstein distance $W_{p,1}$.

In practice (see \cite[Section 1.1]{Nietert} and the references therein), for a given probability measure $\mu$ on $\mathbb{R}^{d}$, one needs to take i.i.d.~samples $X_{1},\ldots,X_{n}$ of $\mu$ and then use the empirical measure $\mu_{n}=\frac{1}{n}\sum_{i=1}^{n}\delta_{X_{i}}$ to approximate the true measure $\mu$ in the max-sliced Wassserstein distance $W_{p,1}$. Thus for computational and statistical purposes, it is essential to know the number of samples needed to obtain an accurate approximation.

In this paper, we study the problem of finding the number of samples needed for $\mu$ and $\mu_{n}=\frac{1}{n}\sum_{i=1}^{n}\delta_{X_{i}}$ to be close in the max-sliced Wassserstein distance $W_{p,1}$.

\subsection{Max-sliced 1-Wasserstein distance}
When $p=1$, by the Kantorovich-Rubinstein theorem, the max-sliced $1$-Wasserstein distance between two probability measures $\mu$ and $\widetilde{\mu}$ on $\mathbb{R}^{d}$ coincides with the following quantity:
\begin{equation}\label{w11alt}
W_{1,1}(\mu,\widetilde{\mu})=\sup_{\substack{v\in\mathbb{R}^{d},\,\|v\|_{2}=1\\f\text{ is 1-Lipschitz}}}\left|\int_{\mathbb{R}^{d}}f(\langle x,v\rangle)\,d\mu(x)-\int_{\mathbb{R}^{d}}f(\langle x,v\rangle)\,d\widetilde{\mu}(x)\right|,
\end{equation}
where the supremum is over all the $v$ on the unit sphere and over all the 1-Lipschitz functions $f:\mathbb{R}\to\mathbb{R}$ (i.e., $|f(x)-f(y)|\leq|x-y|$ for all $x,y\in\mathbb{R}$). Consider the following problem:
\begin{problem}\label{prob1}
Suppose that $\mu$ is a probability measure on $\mathbb{R}^{d}$. Let $X_{1},\ldots,X_{n}$ be i.i.d.~samples of $\mu$. Estimate $\mathbb{E}W_{1,1}(\mu,\frac{1}{n}\sum_{i=1}^{n}\delta_{X_{i}})$.
\end{problem}

There are known estimates (some of which are sharp) of $\mathbb{E}W_{1,1}(\mu,\frac{1}{n}\sum_{i=1}^{n}\delta_{X_{i}})$ under certain regularity assumptions on the measure $\mu$, e.g., log-concavity of $\mu$ \cite[Theorem 1]{Nietert} and \cite[Theorem 1.6]{Bartl1}, or $\mu$ satisfying the transport inequality \cite[Proposition 8]{NR}, or $\mu$ satisfying the projection Bernstein tail condition or the projection Poincar\'e inequality \cite[Theorem 3.5 and Theorem 3.6]{Lin}, or $\mu$ being isotropic with its marginal distributions having uniformly bounded 4th moments \cite[Proposition 4.1]{Bartl1} (see also \cite[Remark 4.2]{Bartl1}).

As for the most general setting, under the only assumption of $\mu$ being supported on $\{x\in\mathbb{R}^{d}:\,\|x\|_{2}\leq r\}$, it was shown in \cite[Proposition 1]{Nietert} that $\mathbb{E}W_{1,1}(\mu,\frac{1}{n}\sum_{i=1}^{n}\delta_{X_{i}})\leq C\cdot\frac{rd}{\sqrt{n}}$, where $C\geq 1$ is a universal constant. In \cite[Theorem 2]{Olea}, this was improved to
$\mathbb{E}W_{1,1}(\mu,\frac{1}{n}\sum_{i=1}^{n}\delta_{X_{i}})\leq C\cdot\frac{r\sqrt{d}}{\sqrt{n}}$. In these two bounds, the rate of convergence $\frac{1}{\sqrt{n}}$ is optimal in $n$, but both bounds involve the dimension $d$.

There is a dimension-free bound for $\mathbb{E}W_{1,1}(\mu,\frac{1}{n}\sum_{i=1}^{n}\delta_{X_{i}})$ that holds with the same generality. More precisely, if $\mu$ is supported on $\{x\in\mathbb{R}^{d}:\,\|x\|_{2}\leq r\}$, then $\mathbb{E}W_{1,1}(\mu,\frac{1}{n}\sum_{i=1}^{n}\delta_{X_{i}})\leq C\cdot r\cdot n^{-1/3}$, where $C\geq 1$ is a universal constant. This follows by taking $k=1$ and optimizing the $\epsilon>0$ in the term $\mathcal{J}_{n}$ in \cite[Theorem 1]{WGX}. This estimate is dimension-free but comes at the cost of slower convergence rate in $n$.

In short, the literature concerning Problem \ref{prob1} can be summarized as follows.
\begin{enumerate}
\item If $\mu$ is supported on $\{x\in\mathbb{R}^{d}:\,\|x\|_{2}\leq r\}$, then $\mathbb{E}W_{1,1}(\mu,\frac{1}{n}\sum_{i=1}^{n}\delta_{X_{i}})\leq C(d)\cdot\frac{r}{\sqrt{n}}$, where $C(d)\geq 1$ is a constant that depends only on $d$.
\item If $\mu$ is supported on $\{x\in\mathbb{R}^{d}:\,\|x\|_{2}\leq r\}$, then $\mathbb{E}W_{1,1}(\mu,\frac{1}{n}\sum_{i=1}^{n}\delta_{X_{i}})\leq C\cdot r\cdot n^{-1/3}$, where $C\geq 1$ is a universal constant.
\item If in addition, $\mu$ satisfies certain regularity assumptions, then $\mathbb{E}W_{1,1}(\mu,\frac{1}{n}\sum_{i=1}^{n}\delta_{X_{i}})\leq C\cdot\frac{r}{\sqrt{n}}$, where $C\geq 1$ is a universal constant.
\end{enumerate}
These results together suggest the following question. Does the dimension-free bound $\mathbb{E}W_{1,1}(\mu,\frac{1}{n}\sum_{i=1}^{n}\delta_{X_{i}})\leq C\cdot\frac{r}{\sqrt{n}}$, where $C\geq 1$ is a universal constant, actually hold for every $\mu$ supported on $\{x\in\mathbb{R}^{d}:\,\|x\|_{2}\leq r\}$ even without any regularity assumptions?

In the first main result of this paper, we answer this question affirmatively. We obtain essentially matching dimension-free upper and lower bounds for $\mathbb{E}W_{1,1}(\mu,\frac{1}{n}\sum_{i=1}^{n}\delta_{X_{i}})$ in the most general setting. This essentially settles Problem \ref{prob1}.
\begin{theorem}\label{mainintro}
Suppose that $\mu$ is a probability measure on $\mathbb{R}^{d}$ with $\int_{\mathbb{R}^{d}}\|x\|_{2}^{2+\delta}\,d\mu(x)<\infty$ for some $\delta>0$. Let $X_{1},\ldots,X_{n}$ be i.i.d.~random vectors in $\mathbb{R}^{d}$ sampled according to $\mu$. Then
\[\mathbb{E}W_{1,1}\left(\mu,\frac{1}{n}\sum_{i=1}^{n}\delta_{X_{i}}\right)\leq\frac{C}{\sqrt{n}}\cdot\inf_{0<\delta\leq 1}\frac{1}{\sqrt{\delta}}\left(\int_{\mathbb{R}^{d}}\|x\|_{2}^{2+\delta}\,d\mu(x)\right)^{\frac{1}{2+\delta}},\]
where $C\geq 1$ is a universal constant. If moreover, $\int_{\mathbb{R}^{d}}x\,d\mu(x)=0$, then
\[\mathbb{E}W_{1,1}\left(\mu,\frac{1}{n}\sum_{i=1}^{n}\delta_{X_{i}}\right)\geq\frac{1}{2\sqrt{2n}}\int_{\mathbb{R}^{d}}\|x\|_{2}\,d\mu(x)\]
\end{theorem}
We also obtain a version of Theorem \ref{mainintro} for probability measures on Banach spaces. Beside being a result of intrinsic interest in the study of probability in Banach spaces (see \cite{LT}), this result is essential for proving the second main result Theorem \ref{mainsamplingone} of this paper on the max-sliced 2-Wasserstein distance for probability measures on Euclidean spaces. Indeed, in proving the latter result, we will take the Banach space $E$ to be the space of all $d\times d$ matrices equipped with the operator norm. In the Banach space setting, to define the metric $W_{1,1}$, in (\ref{w11alt}), instead of taking supremum over $v$ on the unit sphere, we take supremum over all linear functionals $v^{*}\in B_{E^{*}}$, where $B_{E^{*}}$ is the unit ball of the dual space $E^{*}$ centered at the origin. See Section \ref{defsection} for the precise definition.
\begin{theorem}\label{mainintrobanach}
Suppose that $\mu$ is a probability measure on a Banach space $(E,\|\,\|)$ with separable dual $E^{*}$ and that $\int_{E}\|x\|\,d\mu(x)<\infty$. Let $X_{1},\ldots,X_{n}$ be i.i.d.~random points in $E$ sampled according to $\mu$. Then
\[\mathbb{E}W_{1,1}\left(\mu,\frac{1}{n}\sum_{i=1}^{n}\delta_{X_{i}}\right)\leq
\frac{C}{n}\mathbb{E}\left\|\sum_{i=1}^{n}g_{i}X_{i}\right\|+\frac{C\sqrt{\ln n}}{n}\cdot\mathbb{E}\sup_{v^{*}\in B_{E^{*}}}\left(\sum_{i=1}^{n}|v^{*}(X_{i})|^{2}\right)^{\frac{1}{2}},\]
where $g_{1},\ldots,g_{n}$ are i.i.d.~standard Gaussian random variables that are independent of $X_{1},\ldots,X_{n}$, and $C\geq 1$ is a universal constant. If moreover, $\int_{E}x\,d\mu(x)=0$, then
\[\mathbb{E}W_{1,1}\left(\mu,\frac{1}{n}\sum_{i=1}^{n}\delta_{X_{i}}\right)\geq\frac{1}{2n}\mathbb{E}\left\|\sum_{i=1}^{n}\epsilon_{i}X_{i}\right\|,\]
where $\epsilon_{1},\ldots,\epsilon_{n}$ are i.i.d.~uniform $\pm 1$ random variables that are independent of $X_{1},\ldots,X_{n}$.
\end{theorem}
\begin{remark}
The separability assumption on $E^{*}$ is only to ensure that $W_{1,1}(\mu,\frac{1}{n}\sum_{i=1}^{n}\delta_{X_{i}})$ is Borel measurable.
\end{remark}
\begin{remark}
If we fix $X_{1},\ldots,X_{n}$, the quantity in the last term $\sup_{v^{*}\in B_{E^{*}}}\left(\sum_{i=1}^{n}|v^{*}(X_{i})|^{2}\right)^{\frac{1}{2}}$ is exactly the Lipschitz constant of the function $(g_{1},\ldots,g_{n})\mapsto\|\sum_{i=1}^{n}g_{i}X_{i}\|$ with respect to the Euclidean norm on $\mathbb{R}^{n}$. Moreover, by the lower bound in Khintchine's inequality \cite{Haagerup}, namely, $\mathbb{E}|\sum_{i=1}^{n}\epsilon_{i}a_{i}|\geq\frac{1}{\sqrt{2}}(\sum_{i=1}^{n}a_{i}^{2})^{\frac{1}{2}}$ for all $a_{1},\ldots,a_{n}\in\mathbb{R}$, we have
\[\mathbb{E}_{\epsilon}\left\|\sum_{i=1}^{n}\epsilon_{i}X_{i}\right\|\geq\sup_{v^{*}\in B_{E^{*}}}\mathbb{E}_{\epsilon}\left|\sum_{i=1}^{n}\epsilon_{i}v^{*}(X_{i})\right|\geq\sup_{v^{*}\in B_{E^{*}}}\frac{1}{\sqrt{2}}\left(\sum_{i=1}^{n}|v^{*}(X_{i})|^{2}\right)^{\frac{1}{2}},\]
where $\mathbb{E}_{\epsilon}$ denotes the expectation on $\epsilon_{1},\ldots,\epsilon_{n}$. Therefore, since $\mathbb{E}\|\sum_{i=1}^{n}g_{i}X_{i}\|\leq C\sqrt{\ln n}\cdot\mathbb{E}\|\sum_{i=1}^{n}\epsilon_{i}X_{i}\|$ \cite[Exercise 7.1]{vHnotes}, the upper and lower bounds in Theorem \ref{mainintrobanach} differ by at most a $C\sqrt{\ln n}$ factor.
\end{remark}
\subsection{Max-sliced 2-Wasserstein distance}
We now turn to the problem of estimating the expected max-sliced 2-Wasserstein distance $\mathbb{E}W_{2,1}(\mu,\frac{1}{n}\sum_{i=1}^{n}\delta_{X_{i}})$.

Unlike in Theorem \ref{mainintro}, for the max-sliced 2-Wasserstein distance, the convergence rate is not always the same. Even in dimension one, for certain log-concave measures $\mu$ on $\mathbb{R}$, for $p\geq 1$, the quantity $\mathbb{E}W_{p}(\mu,\frac{1}{n}\sum_{i=1}^{n}\delta_{X_{i}})$ is of order $\frac{1}{\sqrt{n}}$ \cite{Bobkov}. However, if $\mu$ is uniformly distributed on two points $1,-1\in\mathbb{R}$, one can easily see that $\mathbb{E}W_{p}(\mu,\frac{1}{n}\sum_{i=1}^{n}\delta_{X_{i}})$ is of order $n^{-1/(2p)}$, which is much slower than $\frac{1}{\sqrt{n}}$ when $p>1$.

Similarly, for the max-sliced 2-Wasserstein distance, if we assume that $\mu$ is log-concave \cite[Theorem 1(2b)]{Nietert} (see also \cite[Theorem 1.6]{Bartl1}), then $\mathbb{E}W_{2,1}(\mu,\frac{1}{n}\sum_{i=1}^{n}\delta_{X_{i}})\leq\frac{C(\mu)\log n}{\sqrt{n}}$, where $C(\mu)\geq 1$ depends only on $\mu$. On the other hand, even if $\mu$ is isotropic and its marginal distributions have uniformly bounded 4th moments, the quantity $\mathbb{E}W_{2}(\mu,\frac{1}{n}\sum_{i=1}^{n}\delta_{X_{i}})$ could already be as large as $c\cdot (d/n)^{\frac{1}{4}}$ for some universal constant $c>0$ \cite[Example 3.3]{Bartl1}.

Thus, in the most general setting (i.e., no regularity assumptions on $\mu$), the best convergence rate in $n$ for the max-sliced 2-Wasserstein distance we can hope for is $n^{-1/4}$.
\begin{corollary}\label{pboundedintro}
Let $r>0$. Suppose that $\mu$ is a probability measure on $\{x\in\mathbb{R}^{d}:\,\|x\|_{2}\leq r\}$. Let $X_{1},\ldots,X_{n}$ be i.i.d.~random vectors in $\mathbb{R}^{d}$ sampled according to $\mu$. Then for all $p\geq 1$,
\[\mathbb{E}W_{p,1}\left(\mu,\frac{1}{n}\sum_{i=1}^{n}\delta_{X_{i}}\right)\leq C\cdot r\cdot n^{-1/(2p)},\]
where $C\geq 1$ is a universal constant.
\end{corollary}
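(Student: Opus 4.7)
The plan is to reduce the claim for general $p\geq 1$ to the case $p=1$, which is handled by Theorem~\ref{mainintro}, via an interpolation inequality exploiting the boundedness of the support.

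\smallskip
\textbf{Step 1 (interpolation between $W_p$ and $W_1$ on marginals).} For any unit vector $v\in\mathbb{R}^d$, the pushforwards $v_\#\mu$ and $v_\#\bigl(\tfrac{1}{n}\sum_{i=1}^n\delta_{X_i}\bigr)$ are supported on the interval $[-r,r]\subset\mathbb{R}$. For any two probability measures $\alpha,\beta$ supported on an interval of length $L$ and any coupling $\pi$ of $(\alpha,\beta)$, we have $|x-y|\leq L$ on the support of $\pi$, so $\int|x-y|^p\,d\pi\leq L^{p-1}\int|x-y|\,d\pi$. Taking the infimum over $\pi$ yields $W_p(\alpha,\beta)^p\leq L^{p-1}W_1(\alpha,\beta)$. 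Applying this with $L=2r$ to each pair of pushforwards and taking the supremum over $v$, we obtain
\[
W_{p,1}\!\left(\mu,\tfrac{1}{n}\sum_{i=1}^n\delta_{X_i}\right)^{p}\leq (2r)^{p-1}\,W_{1,1}\!\left(\mu,\tfrac{1}{n}\sum_{i=1}^n\delta_{X_i}\right).
\]

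\smallskip
\textbf{Step 2 (expectation via Jensen).} Taking expectation and using Jensen's inequality (with the concave function $t\mapsto t^{1/p}$),
\[
\mathbb{E}\,W_{p,1}\!\left(\mu,\tfrac{1}{n}\sum_{i=1}^n\delta_{X_i}\right)\leq (2r)^{(p-1)/p}\left(\mathbb{E}\,W_{1,1}\!\left(\mu,\tfrac{1}{n}\sum_{i=1}^n\delta_{X_i}\right)\right)^{1/p}.
\]

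\smallskip
\textbf{Step 3 (reduction to Theorem~\ref{mainintro}).} Theorem~\ref{mainintro} requires $\mu$ to have mean zero; since $W_{p,1}$ is invariant under simultaneously translating both measures by the same vector, we can center: let $m=\int x\,d\mu$, $\tilde\mu$ the pushforward of $\mu$ by $x\mapsto x-m$, and $\tilde X_i=X_i-m$. Then $\tilde\mu$ has mean zero, is supported in the ball of radius $2r$, and $W_{p,1}(\mu,\tfrac{1}{n}\sum\delta_{X_i})=W_{p,1}(\tilde\mu,\tfrac{1}{n}\sum\delta_{\tilde X_i})$. Applying Theorem~\ref{mainintro} to $\tilde\mu$ with the trivial choice $\delta=1$ and the bound $\|\tilde x\|_2\leq 2r$ gives $\mathbb{E}\,W_{1,1}(\tilde\mu,\tfrac{1}{n}\sum\delta_{\tilde X_i})\leq 2Cr/\sqrt{n}$. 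Substituting into Step 2,
\[
\mathbb{E}\,W_{p,1}\!\left(\mu,\tfrac{1}{n}\sum_{i=1}^n\delta_{X_i}\right)\leq (2r)^{(p-1)/p}(2Cr)^{1/p}\,n^{-1/(2p)}\leq C' r\, n^{-1/(2p)},
\]
since $(2)^{(p-1)/p}(2C)^{1/p}$ is bounded by a universal constant for all $p\geq 1$.

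\smallskip
There is no real obstacle here: the entire content of the corollary is the one-dimensional interpolation $W_p^p\leq L^{p-1}W_1$ applied marginally, combined with Jensen's inequality and the already-established $W_{1,1}$ bound. The only minor care is the translation step to enforce the mean-zero hypothesis of Theorem~\ref{mainintro}, which costs only a factor of $2$ in the effective radius.
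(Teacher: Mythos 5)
Your proof is correct and takes essentially the same route as the paper's: the one-dimensional interpolation $W_{p}^{p}\leq (2r)^{p-1}W_{1}$ applied to each marginal, followed by the $W_{1,1}$ bound of Theorem \ref{mainintro}. You additionally spell out the Jensen step and the translation needed to meet the mean-zero hypothesis of Theorem \ref{mainintro}, both of which the paper's one-line proof leaves implicit.
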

\begin{proof}
For two probability measures $\mu,\widetilde{\mu}$ on $\{x\in\mathbb{R}^{d}:\,\|x\|_{2}\leq r\}$, it is easy to see that $W_{p,1}(\mu,\widetilde{\mu})^{p}\leq (2r)^{p-1}\cdot W_{1,1}(\mu,\widetilde{\mu})$. Thus by Theorem \ref{mainintro}, the result follows.
\end{proof}
Corollary \ref{pboundedintro} removes the dimension factor in the estimate of $\mathbb{E}W_{p,1}\left(\mu,\frac{1}{n}\sum_{i=1}^{n}\delta_{X_{i}}\right)$ in \cite[Theorem 2]{Olea}.

The upper bound $C\cdot r\cdot n^{-1/(2p)}$ in Corollary \ref{pboundedintro} is attained, up to the constant $C$, when $\mu=\frac{1}{2}\delta_{y_{0}}+\frac{1}{2}\delta_{-y_{0}}$ is uniformly distributed on two points $y_{0},-y_{0}\in\mathbb{R}^{d}$ with $y_{0}$ being any vector with $\|y_{0}\|_{2}=r$.

While the bound $C\cdot r\cdot n^{-1/(2p)}$ in Corollary \ref{pboundedintro} is sharp in $n,r,p$, if one also has information on the covariance matrix of $\mu$, then perhaps, one can obtain a better bound that can depend on the covariance matrix of $\mu$. Indeed, suppose that $\mu$ is a probability measure on $\mathbb{R}^{d}$ with $\int_{\mathbb{R}^{d}}\|x\|_{2}^{2}\,d\mu(x)<\infty$. Let $\Sigma=\int_{\mathbb{R}^{d}}xx^{T}\,d\mu(x)$. Then the trace of $\Sigma$ coincides with 
\[\mathrm{Tr}(\Sigma)=\int_{\mathbb{R}^{d}}\mathrm{Tr}(xx^{T})\,d\mu(x)=\int_{\mathbb{R}^{d}}\|x\|_{2}^{2}\,d\mu(x),\]
while the operator norm of $\Sigma$ coincides with the square of the max-sliced 2-Wasserstein distance between $\mu$ and $\delta_{0}$ (the probability measure with an atom of mass $1$ at the origin). More precisely,
%Before we go into further discussions on this, we mention some simple connections between the max-sliced 2-Wasserstein distance and sample covariance matrices. The literature on sample covariance matrices gives us important intuition regarding the convergence in the max-sliced 2-Wasserstein distance.
\begin{eqnarray*}
W_{2,1}(\mu,\delta_{0})&=&\sup_{\|v\|_{2}=1}W_{2}(v_{\#}\mu,v_{\#}\delta_{0})\\&=&\sup_{\|v\|_{2}=1}\left(\int_{\mathbb{R}^{d}}|\langle x,v\rangle|^{2}\,d\mu(x)\right)^{\frac{1}{2}}=\sup_{\|v\|_{2}=1}\langle\Sigma v,v\rangle^{\frac{1}{2}}=\|\Sigma\|_{\mathrm{op}}^{\frac{1}{2}},
\end{eqnarray*}
where $\|\,\|_{\mathrm{op}}$ denotes the operator norm. Thus, for $X_{1},\ldots,X_{n}$ in $\mathbb{R}^{d}$, we have
\begin{eqnarray}\label{w2sc}
W_{2,1}\left(\mu,\frac{1}{n}\sum_{i=1}^{n}\delta_{X_{i}}\right)&\geq&W_{2,1}\left(\frac{1}{n}\sum_{i=1}^{n}\delta_{X_{i}},\delta_{0}\right)-W_{2,1}(\mu,\delta_{0})\\&=&
\left\|\frac{1}{n}\sum_{i=1}^{n}X_{i}X_{i}^{T}\right\|_{\mathrm{op}}^{\frac{1}{2}}-\|\Sigma\|_{\mathrm{op}}^{\frac{1}{2}}.\nonumber
\end{eqnarray}
So in order for $W_{2,1}\left(\mu,\frac{1}{n}\sum_{i=1}^{n}\delta_{X_{i}}\right)$ to be small, it is necessary that $\left\|\frac{1}{n}\sum_{i=1}^{n}X_{i}X_{i}^{T}\right\|_{\mathrm{op}}$ cannot be too much larger than $\|\Sigma\|_{\mathrm{op}}$.

Given that $W_{2,1}(\mu,\delta_{0})=\|\Sigma\|_{\mathrm{op}}^{\frac{1}{2}}$, the quantity $W_{2,1}(\mu,\frac{1}{n}\sum_{i=1}^{n}\delta_{X_{i}})$ should be assessed relative to $\|\Sigma\|_{\mathrm{op}}^{\frac{1}{2}}$.
\begin{problem}\label{prob2}
Suppose that $\mu$ is a probability measure on $\mathbb{R}^{d}$. Let $\Sigma=\int_{\mathbb{R}^{d}}xx^{T}\,d\mu(x)$. How many i.i.d.~samples $X_{1},\ldots,X_{n}$ of $\mu$ are needed to make the following quantity small?
\[\|\Sigma\|_{\mathrm{op}}^{-\frac{1}{2}}\cdot\mathbb{E}W_{2,1}\left(\mu,\frac{1}{n}\sum_{i=1}^{n}\delta_{X_{i}}\right).\]
\end{problem}

In \cite[Theorem 1.3]{Bartl1}, it was shown that if $\mu$ is centered and isotropic (i.e., $\Sigma=I$) with
$\sup_{v\in\mathbb{R}^{d},\,\|v\|_{2}=1}(\mathbb{E}|\langle X,v\rangle|^q)^{\frac{1}{q}}\leq L$ where $q>4$, then with high probability,
\begin{equation}\label{Bartl1result}
W_{2,1}\left(\mu,\frac{1}{n}\sum_{i=1}^{n}\delta_{X_{i}}\right)\leq C(q,L)\left[\left\|\frac{1}{n}\sum_{i=1}^{n}X_{i}X_{i}^{T}-I\right\|_{\mathrm{op}}^{\frac{1}{2}}+\left(\frac{d}{n}\right)^{\frac{1}{4}}\right],
\end{equation}
where $C(q,L)\geq 1$ is a constant that depends only on $q$ and $L$. By \cite{Tikhomirov}, the sample covariance error term $\left\|\frac{1}{n}\sum_{i=1}^{n}X_{i}X_{i}^{T}-I\right\|_{\mathrm{op}}^{\frac{1}{2}}$ is of order $\left(\frac{d}{n}\right)^{\frac{1}{4}}$ with high probability. Thus, under the assumptions mentioned above, $n=O(d)$ suffices in Problem \ref{prob2}.

The literature on sample covariance matrices (see e.g., \cite{Rudelson, Romanna, Tropp}) suggests that for a general isotropic probability measure $\mu$ supported on $\{x\in\mathbb{R}^{d}:\,\|x\|_{2}\leq C\sqrt{d}\}$ but without the assumption $\sup_{v\in\mathbb{R}^{d},\,\|v\|_{2}=1}(\mathbb{E}|\langle X,v\rangle|^q)^{\frac{1}{q}}\leq L$, the number of samples $n=O(d\log d)$ should suffice in Problem \ref{prob2}. More generally, if $\mu$ is supported on $\{x\in\mathbb{R}^{d}:\,\|x\|_{2}\leq r\}$ but not necessarily isotropic, $n=O(\frac{r^{2}}{\|\Sigma\|_{\mathrm{op}}}\log\frac{r^{2}}{\|\Sigma\|_{\mathrm{op}}})$ should suffice in Problem \ref{prob2}.

In this paper, we show that these are indeed true for symmetric $\mu$ and its symmetrized empirical distribution. A probability measure $\mu$ on $\mathbb{R}^{d}$ is symmetric if $\mu(A)=\mu(-A)$ for all measurable $A\subset\mathbb{R}^{d}$.
\begin{theorem}\label{mainsamplingone}
Let $r>0$. Suppose that $\mu$ is a symmetric probability measure on $\mathbb{R}^{d}$ supported on $\{x\in\mathbb{R}^{d}:\,\|x\|_{2}\leq r\}$. Let $X_{1},\ldots,X_{n}$ be i.i.d.~random vectors sampled according to $\mu$. Then
\[\mathbb{E}\left[W_{2,1}\left(\mu,\frac{1}{2n}\sum_{i=1}^{n}(\delta_{X_{i}}+\delta_{-X_{i}})\right)^{2}\right]\leq
C\|\Sigma\|_{\mathrm{op}}\left(\frac{r^{2}\ln n}{n\|\Sigma\|_{\mathrm{op}}}+\sqrt{\frac{r^{2}\ln n}{n\|\Sigma\|_{\mathrm{op}}}}\,\right),\]
where $\displaystyle\Sigma=\int_{\mathbb{R}^{d}}xx^{T}\,d\mu(x)$ and $C\geq 1$ is a universal constant.
\end{theorem}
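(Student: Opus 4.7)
The plan is to reduce the squared max-sliced 2-Wasserstein distance to a max-sliced 1-Wasserstein distance on the Banach space of symmetric matrices equipped with the operator norm, then invoke Theorem \ref{mainintrobanach}, and finally bound the resulting terms by dimension-free matrix concentration.

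The first step is a squaring trick for symmetric one-dimensional measures. If $\nu_1,\nu_2$ are symmetric probability measures on $\mathbb{R}$ with finite second moment, then applying the monotone quantile coupling together with the elementary inequality $(\sqrt{a}-\sqrt{b})^2\leq |a-b|$ for $a,b\geq 0$ yields
\[
W_2(\nu_1,\nu_2)^2 \;\leq\; W_1(\mathrm{sq}_\#\nu_1,\mathrm{sq}_\#\nu_2),\qquad \mathrm{sq}(x):=x^2.
\]
Let $E$ denote the Banach space of symmetric $d\times d$ matrices with the operator norm; its dual is the trace class $S_1$. For each unit vector $v$, the squared pushforward $\mathrm{sq}_\#(v_\#\mu)$ equals the pushforward of $\mu^\otimes$, the law of $XX^T$, under the linear functional $A\mapsto \mathrm{tr}(vv^T A)$, and the analogous identity holds for $\hat\mu^{\mathrm{sym}}:=\frac{1}{2n}\sum_{i}(\delta_{X_i}+\delta_{-X_i})$, whose squared pushforward collapses to $\frac{1}{n}\sum_i\delta_{X_iX_i^T}$. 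Since $\|vv^T\|_{S_1}=1$, taking the supremum over $\|v\|_2=1$ gives
\[
W_{2,1}(\mu,\hat\mu^{\mathrm{sym}})^2 \;\leq\; W_{1,1}^{E}\!\Big(\mu^\otimes,\, \tfrac{1}{n}\textstyle\sum_{i=1}^{n}\delta_{X_iX_i^T}\Big).
\]

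Translation invariance of $W_{1,1}^{E}$ allows recentering by $\Sigma$, so I apply Theorem \ref{mainintrobanach} to the centered variables $Y_i:=X_iX_i^T-\Sigma$, getting
\[
\mathbb{E}[W_{2,1}(\mu,\hat\mu^{\mathrm{sym}})^2] \;\leq\; \frac{C}{n}\mathbb{E}\Big\|\sum_{i=1}^n g_iY_i\Big\|_{\mathrm{op}} \;+\; \frac{C\sqrt{\ln n}}{n}\mathbb{E}\sup_{A\in B_{S_1}}\Big(\sum_{i=1}^n \mathrm{tr}(AY_i)^2\Big)^{1/2}.
\]
For the second term, the supremum is attained at rank-one extreme points $A=uv^T$, and bounding $|\langle v,X_i\rangle|\leq r$ gives
\[
\sup_{\|u\|=\|v\|=1}\sum_i (u^TY_iv)^2 \;\leq\; 2r^2 n\,\|\hat\Sigma\|_{\mathrm{op}} + 2n\,\|\Sigma\|_{\mathrm{op}}^2,
\]
where $\hat\Sigma=\frac{1}{n}\sum_iX_iX_i^T$; the Adamczak-type sample-covariance bound $\mathbb{E}\|\hat\Sigma-\Sigma\|_{\mathrm{op}}\leq C\bigl(\sqrt{r^2\|\Sigma\|_{\mathrm{op}}\ln n/n}+r^2\ln n/n\bigr)$ then produces exactly the target contribution $r\|\Sigma\|_{\mathrm{op}}^{1/2}\sqrt{\ln n/n}+r^2\ln n/n$. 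The first term, an expected operator norm of a Gaussian matrix series, is handled analogously: conditional on the $X_i$'s the variance operator satisfies $\sum Y_i^2\preceq 2r^2 n\hat\Sigma+2n\Sigma^2$, and $\|\mathbb{E} Y_1^2\|_{\mathrm{op}}\leq r^2\|\Sigma\|_{\mathrm{op}}$ follows from $\mathbb{E}(XX^T)^2\preceq r^2\Sigma$, so a dimension-free Gaussian concentration bound delivers the same order.

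The main obstacle is ensuring that the two matrix concentration estimates carry only a $\sqrt{\ln n}$ prefactor rather than $\sqrt{\ln d}$, so that the final bound is genuinely dimension-free. Standard noncommutative Khintchine would introduce $\sqrt{\ln d}$; the fix is to rely on the dimension-free / effective-rank formulations of matrix concentration in the spirit of Adamczak and Tropp, which for bounded samples effectively replace $\ln d$ by $\ln n$ in the sample-covariance bound. With these ingredients, careful bookkeeping of both terms yields the claimed inequality.
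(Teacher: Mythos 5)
Your proposal follows essentially the same route as the paper: the symmetry-plus-squaring reduction $W_{2,1}^2\leq W_{1,1}$ on the space of matrices with the operator norm is exactly the paper's Lemma \ref{2to1}, the application of the Banach-space upper bound with $E=(\mathbb{R}^{d\times d},\|\,\|_{\mathrm{op}})$ and dual ball generated by rank-one projections is the paper's main step, and the two resulting terms are controlled by Rudelson-type bounds as in Lemma \ref{rmbound}. (The recentering by $\Sigma$ is harmless but unnecessary: the upper bound in Theorem \ref{mainintrobanach} does not require mean zero, and the paper applies it directly to $\eta_{\#}\mu$.)

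The one place where your justification does not hold up is precisely the step you flag as the main obstacle: obtaining $\sqrt{\ln n}$ rather than $\sqrt{\ln d}$ in $\mathbb{E}\|\sum_i g_iY_i\|_{\mathrm{op}}$. The intrinsic-dimension formulations of matrix concentration replace $\ln d$ by $\ln(\mathrm{Tr}(V)/\|V\|_{\mathrm{op}})$ for the variance proxy $V$, and for your proxy $V=2r^2n\hat\Sigma+2n\Sigma^2$ this effective rank is governed by quantities like $r^2/\|\Sigma\|_{\mathrm{op}}$ and $d$, not by $n$; so that route does not deliver $\ln n$. The mechanism that actually works (and is what the paper uses) is the rank observation inside the noncommutative Khintchine inequality: $\sum_i(x_ix_i^T)^2=\sum_i\|x_i\|_2^2\,x_ix_i^T$ has rank at most $n$, hence $\mathrm{Tr}(A^p)\leq n\|A\|_{\mathrm{op}}^p$, and taking Schatten exponent $2p$ with $p=\lceil\ln n\rceil$ yields $\mathbb{E}\|\sum_i g_ix_ix_i^T\|_{\mathrm{op}}\leq Cr\sqrt{\ln n}\,\|\sum_i x_ix_i^T\|_{\mathrm{op}}^{1/2}$, from which both inequalities of Lemma \ref{rmbound} follow by symmetrization. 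Note also that this rank argument does not apply directly to your centered summands $Y_i=X_iX_i^T-\Sigma$, since $\Sigma^2$ may have full rank; you should first split off $\sum_i g_i\Sigma$, whose contribution $\frac{1}{n}\cdot\sqrt{n}\,\|\Sigma\|_{\mathrm{op}}\leq r\|\Sigma\|_{\mathrm{op}}^{1/2}/\sqrt{n}$ is within the target. With that substitution the rest of your bookkeeping (including the bound $\sup_{u,v}\sum_i(u^TY_iv)^2\leq 2r^2n\|\hat\Sigma\|_{\mathrm{op}}+2n\|\Sigma\|_{\mathrm{op}}^2$ and the use of $\|\Sigma\|_{\mathrm{op}}\leq r^2$) is correct and yields the stated estimate.
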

The $\ln n$ factors in Theorem \ref{mainsamplingone} cannot always be removed. Indeed, consider the probability measure $\mu$ uniformly distributed on the $2d$ points $\pm \sqrt{d}\,e_{1},\ldots,\pm\sqrt{d}\,e_{d}$, where $\{e_{1},\ldots,e_{d}\}$ is the unit vector basis for $\mathbb{R}^{d}$. Then by (\ref{w2sc}), we have
\[W_{2,1}\left(\mu,\frac{1}{2n}\sum_{i=1}^{n}(\delta_{X_{i}}+\delta_{-X_{i}})\right)\geq\left\|\frac{1}{n}\sum_{i=1}^{n}X_{i}X_{i}^{T}\right\|_{\mathrm{op}}^{\frac{1}{2}}-\|\Sigma\|^{\frac{1}{2}},\]
where $\Sigma=\int_{\mathbb{R}^{d}}xx^{T}\,d\mu(x)=I$. If we view $e_{1},\ldots,e_{d}$ as $d$ bins and each $X_{i}X_{i}^{T}$ as a ball into a bin, then $\frac{1}{d}\left\|\sum_{i=1}^{n}X_{i}X_{i}^{T}\right\|$ is the maximum number of balls in a bin after $n$ balls are thrown into $d$ bins. So by \cite[Theorem 1]{Raab}, when $\frac{d}{\mathrm{polylog}(d)}\leq n\ll d\log d$,
\[\mathbb{E}\left\|\frac{1}{n}\sum_{i=1}^{n}X_{i}X_{i}^{T}\right\|_{\mathrm{op}}^{\frac{1}{2}}\geq c\left(\frac{d}{n}\cdot\frac{\log d}{\log\frac{d\log d}{n}}\right)^{\frac{1}{2}},\]
where $c>0$ is a universal constant. Thus, in this example, the $\ln n$ factors in Theorem \ref{mainsamplingone} cannot be removed.

The following lower bound result shows that the upper bound in Theorem \ref{mainsamplingone} is sharp for every covariance matrix $\Sigma$ up to the $\ln n$ factor.
\begin{proposition}\label{mainsamplinglbone}
Let $\Sigma$ be a $d\times d$ positive semidefinite matrix such that $\|\Sigma\|_{\mathrm{op}}\leq\frac{1}{2}\mathrm{Tr}(\Sigma)$. Then there exists a symmetric probability measure $\mu$ on $\mathbb{R}^{d}$ supported on $\{x\in\mathbb{R}^{d}:\,\|x\|_{2}^{2}=\mathrm{Tr}(\Sigma)\}$ such that $\int_{\mathbb{R}^{d}}xx^{T}\,d\mu(x)=\Sigma$ and for every $n\in\mathbb{N}$, 
\[\mathbb{E}\left[W_{2,1}\left(\mu,\frac{1}{2n}\sum_{i=1}^{n}(\delta_{X_{i}}+\delta_{-X_{i}})\right)^{2}\right]\geq
\frac{1}{16}\|\Sigma\|_{\mathrm{op}}\left(\frac{\mathrm{Tr}(\Sigma)}{n\|\Sigma\|_{\mathrm{op}}}+\sqrt{\frac{\mathrm{Tr}(\Sigma)}{n\|\Sigma\|_{\mathrm{op}}}}\,\right),\]
where $X_{1},\ldots,X_{n}$ are i.i.d.~random vectors sampled according to $\mu$.
\end{proposition}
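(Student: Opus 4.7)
The plan is to take $\mu=\sum_{i=1}^d(p_i/2)(\delta_{\sqrt{T}\,u_i}+\delta_{-\sqrt{T}\,u_i})$, where $T=\mathrm{Tr}(\Sigma)$, $u_1,\ldots,u_d$ is an orthonormal eigenbasis of $\Sigma$ with eigenvalues $\lambda_1\geq\cdots\geq\lambda_d\geq 0$, and $p_i=\lambda_i/T$. Writing $L=\lambda_1=\|\Sigma\|_{\mathrm{op}}$, one checks directly that $\mu$ is symmetric, supported on $\{\|x\|_2^2=T\}$, and has covariance $\Sigma$; the assumption $L\leq T/2$ reads $p_1\leq 1/2$.

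Let $X_1,\ldots,X_n$ be i.i.d.\ samples from $\mu$, let $N_i$ count the indices $k$ with $X_k\in\{\pm\sqrt{T}\,u_i\}$, and write $\widehat\mu_n=\frac{1}{2n}\sum_{k=1}^n(\delta_{X_k}+\delta_{-X_k})$. The main structural step is that both pushforwards $(u_i)_\#\mu$ and $(u_i)_\#\widehat\mu_n$ are symmetric measures on $\{-\sqrt{T},0,\sqrt{T}\}$ carrying masses $p_i/2$ resp.\ $N_i/(2n)$ at each nonzero atom. A direct one-dimensional optimal transport computation (the optimal coupling moves the excess atomic mass to the zero atom) gives $W_2((u_i)_\#\mu,(u_i)_\#\widehat\mu_n)^2=T|p_i-N_i/n|$, and therefore
\[W_{2,1}(\mu,\widehat\mu_n)^2\geq T\max_{1\leq i\leq d}\bigl|p_i-N_i/n\bigr|.\]

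The remainder is a case analysis on $np_1$ determining which term in the target dominates. \emph{Case $np_1\geq 1$.} Here $T/n\leq\sqrt{LT/n}$, so the target is bounded by $\sqrt{LT/n}/8$. Since $\mathrm{Var}(N_1)=np_1(1-p_1)\geq 1/2$, the binomial fourth-moment estimate $\mathbb{E}(N_1-np_1)^4\leq 6\,\mathrm{Var}(N_1)^2$ combined with $(\mathbb{E}X^2)^2\leq\mathbb{E}|X|\cdot\mathbb{E}|X|^3$ and $\mathbb{E}|X|^3\leq(\mathbb{E}X^2\cdot\mathbb{E}X^4)^{1/2}$ yields $\mathbb{E}|N_1-np_1|\geq\mathrm{Var}(N_1)^{1/2}/\sqrt{6}\geq\sqrt{np_1/12}$, giving $\mathbb{E}W_{2,1}^2\geq\sqrt{LT/n}/\sqrt{12}$, which is comfortably above $\sqrt{LT/n}/8$. \emph{Case $np_1\leq 1$.} Here $\sqrt{LT/n}\leq T/n$, so the target is bounded by $T/(8n)$. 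I split further at $p_1=1/(2n)$. If $p_1\leq 1/(2n)$, then $i_*=\arg\max_i N_i$ has $N_{i_*}\geq 1$ and $p_{i_*}\leq 1/(2n)$, so $|N_{i_*}/n-p_{i_*}|\geq 1/(2n)$ deterministically and $W_{2,1}^2\geq T/(2n)$. If $p_1>1/(2n)$ (forcing $n\geq 2$ and $p_1\leq 1/n$), then $(1-p_1)^n\geq(1-1/n)^n\geq 1/4$ gives $P(N_1=0)\geq 1/4$, and on that event $|p_1-N_1/n|=p_1>1/(2n)$, so $\mathbb{E}W_{2,1}^2\geq Tp_1/4>T/(8n)$.

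The main technical difficulty is the transition regime $np_1\sim 1$: the $\arg\max$ bound degenerates as $p_1\to 1/n$ because $1/n-p_{i_*}$ may vanish, while the fourth-moment estimate loses its edge as $\mathrm{Var}(N_1)$ drops to order $1$. The split at $p_1=1/(2n)$ inside the $np_1\leq 1$ case is designed precisely so that above the threshold the $\arg\max$ argument is replaced by the event $\{N_1=0\}$, whose probability is bounded below by $1/4$ exactly because $p_1\leq 1/n$ there. Reconciling the resulting constants $1/\sqrt{12}$, $1/2$, and $1/4$ with the $1/16$ in the stated bound is then purely arithmetic bookkeeping.
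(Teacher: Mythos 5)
Your argument is correct, and it shares the paper's skeleton — the same measure (the symmetrized, rescaled eigenbasis distribution) and the same key device of projecting onto an eigendirection and computing the one-dimensional $W_2$ between the resulting three-point symmetric measures exactly, which is also the engine of the paper's treatment of the $\sqrt{\mathrm{Tr}(\Sigma)/(n\|\Sigma\|_{\mathrm{op}})}$ term — but it diverges in two places. First, for the $\mathrm{Tr}(\Sigma)/n$ term the paper tests $W_{1,1}\leq W_{2,1}$ against the Lipschitz function $t\mapsto|t|$ and aggregates the fluctuations of \emph{all} coordinate frequencies in $\ell_2$, obtaining $\mathbb{E}\bigl\|\mathrm{diag}(\Sigma)-\frac1n\sum_i\mathrm{abs}(X_i)\bigr\|_2^2\geq\frac{1}{2n}$ unconditionally; you instead run a case analysis on $np_1$, using the argmax coordinate (deterministic deviation at least $1/(2n)$ when all $p_i\leq 1/(2n)$) or the event $\{N_1=0\}$ (probability at least $1/4$ when $1/(2n)<p_1\leq 1/n$), which works because in the remaining regime $np_1\geq1$ your binomial bound already dominates both terms. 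Second, for the square-root term the paper invokes the Berend--Kontorovich mean-absolute-deviation bound for the binomial, whereas you derive the weaker but sufficient bound $\mathbb{E}|N_1-np_1|\geq\sqrt{\mathrm{Var}(N_1)}/\sqrt{6}$ from the H\"older chain $(\mathbb{E}X^2)^2\leq\mathbb{E}|X|\,\mathbb{E}|X|^3$ and $\mathbb{E}|X|^3\leq(\mathbb{E}X^2\,\mathbb{E}X^4)^{1/2}$ plus the binomial fourth central moment (note your inequality $\mathbb{E}(N_1-np_1)^4\leq 6\,\mathrm{Var}(N_1)^2$ is valid only because you first secure $\mathrm{Var}(N_1)\geq1/2$; it fails for small variance). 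Your route is fully self-contained, and the pathwise bound $W_{2,1}^2\geq \mathrm{Tr}(\Sigma)\max_i|p_i-N_i/n|$ is a mild strengthening of what the paper extracts; the paper's $\ell_2$-aggregation buys a cleaner, case-free handling of the first term at the cost of an external citation for the second.
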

\subsection{Some definitions}\label{defsection}
Throughout this paper, unless specified otherwise, we always use the Euclidean metric $\|\,\|_{2}$ on $\mathbb{R}^{d}$. If $f:\Lambda\to\mathbb{R}$ is a bounded function, then $\|f\|_{\infty}:=\sup_{x\in\Lambda}|f(x)|$. A function $f:\mathbb{R}^{s}\to\mathbb{R}$ is $1$-Lipschitz function if $|f(x)-f(y)|\leq\|x-y\|_{2}$ for all $x,y\in\mathbb{R}^{s}$. The operator norm (or equivalently the largest singular value) of a matrix $A$ is denoted by $\|A\|_{\mathrm{op}}$

If $(T,\rho)$ is a metric space and $\epsilon>0$, then the covering number $N(T,\rho,\epsilon)$ is the smallest size of $S\subset T$ for which every element of $T$ has distance at most $\epsilon$ from an element of $S$. The packing number $N_{\mathrm{pack}}(T,\rho,\epsilon)$ is the largest size of $S\subset T$ for which all elements of $S$ have distance more than $\epsilon$ away from each other. We always have
\begin{equation}\label{coverpack}
N(T,\rho,\epsilon)\leq N_{\mathrm{pack}}(T,\rho,\epsilon)\leq N\left(T,\rho,\frac{\epsilon}{2}\right).
\end{equation}
In particular, if $T_{0}\subset T$ then
\begin{equation}\label{coversubset}
N(T_{0},\rho,\epsilon)\leq N_{\mathrm{pack}}(T_{0},\rho,\epsilon)\leq N_{\mathrm{pack}}(T,\rho,\epsilon)\leq N\left(T,\rho,\frac{\epsilon}{2}\right).
\end{equation}

If $E$ is a Banach space, then the unit ball $\{x\in E:\,\|x\|\leq 1\}$ of $E$ is denoted by $B_{E}$. The dual space of all bounded linear functionals $v^{*}:E\to\mathbb{R}$ is denoted by $E^{*}$.

{\bf Pushforward measure:} If $\mu$ is a probability measure on a separable Banach space $E$ and $Q:E\to\mathbb{R}^{s}$ is a map, then $Q_{\#}\mu$ is the pushforward measure of $\mu$ by $Q$, i.e., if $X$ is a random point in $E$ with distribution $\mu$, then $Q(X)$ has distribution $Q_{\#}\mu$. In particular, $Q_{\#}\mu$ is a probability measure on $\mathbb{R}^{s}$.

{\bf Classical Wasserstein distance:} If $\mu$ and $\widetilde{\mu}$ are probability measures on $E$ and $p\geq 1$, then the $p$-Wasserstein distance between $\mu$ and $\widetilde{\mu}$ is
\[W_{p}(\mu,\widetilde{\mu}):=\inf_{\gamma}\left(\int_{E\times E}\|x-y\|^{p}\,d\gamma(x,y)\right)^{\frac{1}{p}},\]
where the infimum is over all distributions $\gamma$ on $E\times E$ with $\mu$ and $\widetilde{\mu}$ being its marginal distributions for its first and second components.

{\bf Max-sliced and projection robust Wasserstein distances:} If $\mu$ and $\widetilde{\mu}$ are probability measures on $E$ and $p\geq 1$, $s\in\mathbb{N}$, then
\[W_{p,s}(\mu,\widetilde{\mu}):=\sup_{Q}W_{p}(Q_{\#}\mu,Q_{\#}\widetilde{\mu}),\]
where the supremum is over all $Q:E\to\mathbb{R}^{s}$ of the form $Qx=(v_{1}^{*}(x),\ldots,v_{s}^{*}(x))$, for $x\in E$, with $v_{1}^{*},\ldots,v_{s}^{*}$ in the unit ball $B_{E^{*}}$ of $E^{*}$. Here we use the Euclidean distance $\|\,\|_{2}$ on $\mathbb{R}^{s}$ to define the Wasserstein distance $W_{p}$ on the right hand side.

When $p=1$, we have
\begin{align*}
&W_{1,s}(\mu,\widetilde{\mu})\\=&
\sup_{\substack{v_{1}^{*},\ldots,v_{s}^{*}\in B_{E^{*}}\\f\text{ is 1-Lipschitz}}}\left|\int_{E}f(v_{1}^{*}(x),\ldots,v_{s}^{*}(x))\,d\mu(x)-\int_{E}f(v_{1}^{*}(x),\ldots,v_{s}^{*}(x))\,d\widetilde{\mu}(x)\right|,
\end{align*}
where the supremum is over all $v_{1}^{*},\ldots,v_{s}^{*}\in B_{E^{*}}$ and all the $1$-Lipschitz functions $f:\mathbb{R}^{s}\to\mathbb{R}$.
\subsection{Organization of this paper}
In the rest of this paper, we prove the results stated in this introduction section.

In Section 2, we prove Theorem \ref{mainintro} and Theorem \ref{mainintrobanach}. The upper bound parts of Theorem \ref{mainintro} and Theorem \ref{mainintrobanach} are contained in Corollary \ref{mainsamplinghilbert} and Corollary \ref{mainsamplingbanach}, respectively. The lower bound parts of Theorem \ref{mainintro} and Theorem \ref{mainintrobanach} are stated as Corollary \ref{lbthmhilbert} and Proposition \ref{lbthm}, respectively.

In Section 3, we prove Theorem \ref{mainsamplingone} and Proposition \ref{mainsamplinglbone}.

\section{Max-sliced 1-Wasserstein distance}
In this section, we first derive a general upper bound result Theorem \ref{mainsampling} (which we obtain at a greater generality of $W_{1,s}$) for the expected max-sliced $1$-Wasserstein distance between a probability measure on a Banach space and its empirical distribution. From this result, Corollary \ref{mainsamplinghilbert} and Corollary \ref{mainsamplingbanach} follow as consequences. These give the upper bound parts of Theorem \ref{mainintro} and \ref{mainintrobanach}, respectively. Lower bound results are proved at the end of this section.

To prove Theorem \ref{mainsampling}, we use Gaussian symmetrization to reduce the problem of bounding the expected max-sliced 1-Wasserstein distance to bounding the expected supremum of a Gaussian process. To bound this expected supremum, we use Talagrand's majorizing measure theorem. We bound the metric induced by the Gaussian process by the product metric of (1) a metric on some function space (which is locally an $\|\,\|_{\infty}$ metric) and (2) a Hilbert space metric. Since Talagrand's $\gamma_{2}$ quantity of the product metric space is bounded by 3 times the sum of the $\gamma_{2}$ for each metric space, it suffices to bound the $\gamma_{2}$ for each of these two metric spaces. To bound the $\gamma_{2}$ for the first metric space, we use the Dudley's entropy integral. As for the second metric space, since it is a Hilbert space metric, the $\gamma_{2}$ for that metric space is equivalent to the supremum of some Gaussian process which, in fact, coincides with the norm of a Gaussian sum.

Let
\[N_{k}=\begin{cases}2^{2^{k}},&k\geq 1\\1,&k=0\end{cases}.\]
The following notion was introduced by Talagrand \cite{Talagrand} (see also \cite[Chapter 8]{Romanbook} and \cite[Chapter 6]{vHnotes}). For a given metric space $(T,\rho)$, define
\begin{equation}\label{gamma2}
\gamma_{2}(T,\rho):=\inf_{\mathrm{admissible}\,T_{0},T_{1},\ldots}\;\sup_{t\in T}\sum_{k=0}^{\infty}2^{\frac{k}{2}}\rho(t,T_{k}),
\end{equation}
where admissible means that $T_{0},T_{1},\ldots\subset T$ with $|T_{k}|\leq N_{k}$ for all $k\geq 0$. Also $\rho(t,T_{k})=\inf_{t_{k}\in T_{k}}\rho(t,t_{k})$.

Talagrand's majorizing measure theorem states that if $(X_{t})_{t\in T}$ is a mean zero Gaussian process, then letting $\rho(t,s)=(\mathbb{E}|X_{t}-X_{s}|^{2})^{\frac{1}{2}}$, we have
\begin{equation}\label{gc}
c\gamma_{2}(T,\rho)\leq\mathbb{E}\sup_{t\in T}X_{t}\leq C\gamma_{2}(T,\rho),
\end{equation}
where $C,c>0$ are universal constants. For the readers familiar with maximum mean discrepancy, when the kernel is given by the covariance $k(t,s)=\mathrm{cov}(X_{t},X_{s})=\mathbb{E}(X_{t}X_{s})$, the metric $\rho(t,s)$ can be thought of as the maximum mean discrepancy between the measures $\delta_{t}$ and $\delta_{s}$ on $T$. Indeed,
\begin{align*}
\mathrm{MMD}_{k}(\delta_{t},\delta_{s})^{2}=&\|k(\cdot,t)-k(\cdot,s)\|_{\mathrm{RKHS}}^{2}=k(t,t)-2k(t,s)+k(s,s)\\=&
\mathbb{E}(X_{t}^{2})-2\mathbb{E}(X_{t}X_{s})+\mathbb{E}(X_{s}^{2})=\mathbb{E}(|X_{t}-X_{s}|^{2})=\rho(t,s)^{2},
\end{align*}
where $\|\,\|_{\mathrm{RKHS}}$ denotes the reproducing kernel Hilbert space norm induced by $k(\cdot,\cdot)$.
\begin{lemma}\label{productgamma}
Let $(T,\rho_{T})$ and $(Z,\rho_{Z})$ be metric spaces. Define the metric $\rho_{T}\times\rho_{Z}$ on $T\times Z$ by
\[(\rho_{T}\times\rho_{Z})((t_{1},z_{1}),(t_{2},z_{2}))=\rho_{T}(t_{1},t_{2})+\rho_{Z}(z_{1},z_{2}).\]
Then
\[\gamma_{2}(T\times Z,\rho_{T}\times\rho_{Z})\leq3\gamma_{2}(T,\rho_{T})+3\gamma_{2}(Z,\rho_{Z}).\]
\end{lemma}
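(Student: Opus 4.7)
My plan is to build an admissible sequence for $T\times Z$ by taking products of admissible sequences for $T$ and $Z$, using an index shift to accommodate the doubly-exponential size constraint $|T_k|\le N_k=2^{2^k}$.

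Specifically, fix $\eta>0$ and choose admissible sequences $(T_k)_{k\ge 0}$ and $(Z_k)_{k\ge 0}$ such that $\sup_{t\in T}\sum_{k\ge 0}2^{k/2}\rho_T(t,T_k)\le\gamma_2(T,\rho_T)+\eta$ and the analogous bound for $Z$. Define $A_0=T_0\times Z_0$ and $A_{k+1}=T_k\times Z_k$ for $k\ge 0$. Then $|A_0|=1=N_0$, $|A_1|\le 1\le N_1$, and for $k\ge 1$ we have $|A_{k+1}|\le N_k\cdot N_k=2^{2^{k+1}}=N_{k+1}$, so $(A_k)_{k\ge 0}$ is admissible for $T\times Z$.

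Next, for any $(t,z)\in T\times Z$ and $k\ge 1$, the definition of the product metric gives $(\rho_T\times\rho_Z)((t,z),A_k)\le\rho_T(t,T_{k-1})+\rho_Z(z,Z_{k-1})$, while for $k=0$ we have $(\rho_T\times\rho_Z)((t,z),A_0)=\rho_T(t,T_0)+\rho_Z(z,Z_0)$. Summing and reindexing with $j=k-1$,
\begin{align*}
\sum_{k=0}^{\infty}2^{k/2}(\rho_T\times\rho_Z)((t,z),A_k)
&\le \rho_T(t,T_0)+\rho_Z(z,Z_0)+\sqrt{2}\sum_{j=0}^{\infty}2^{j/2}\bigl(\rho_T(t,T_j)+\rho_Z(z,Z_j)\bigr)\\
&\le (1+\sqrt{2})\sum_{j=0}^{\infty}2^{j/2}\rho_T(t,T_j)+(1+\sqrt{2})\sum_{j=0}^{\infty}2^{j/2}\rho_Z(z,Z_j),
\end{align*}
where in the last step the two stray initial terms are absorbed into the $j=0$ summands. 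Taking the supremum over $(t,z)\in T\times Z$, then using the choice of the admissible sequences and letting $\eta\to 0$, yields $\gamma_2(T\times Z,\rho_T\times\rho_Z)\le(1+\sqrt{2})(\gamma_2(T,\rho_T)+\gamma_2(Z,\rho_Z))\le 3\gamma_2(T,\rho_T)+3\gamma_2(Z,\rho_Z)$.

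The only subtlety is the index shift, which is forced by the fact that the cardinality bound $N_k=2^{2^k}$ does not satisfy $N_k^2\le N_k$ but does satisfy $N_k^2=N_{k+1}$; pushing the products one level down in the sequence costs the factor $\sqrt{2}$ from the $2^{k/2}$ weights. Everything else is bookkeeping with the triangle inequality for the product metric, so there is no real obstacle.
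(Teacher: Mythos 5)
Your proof is correct and is essentially the same as the paper's: both shift the product sequence down by one index (the paper writes this via the convention $T_{-1}=T_{0}$, $Z_{-1}=Z_{0}$), verify admissibility from $N_{k}^{2}=N_{k+1}$, and absorb the extra factor $\sqrt{2}$ plus the stray initial term into the constant $3$. No issues.
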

\begin{proof}
Fix $\epsilon>0$. Let $T_{0},T_{1},\ldots\subset T$ be an admissible sequence that almost attains the infimum in (\ref{gamma2}), i.e.,
\[\sup_{t\in T}\sum_{k=0}^{\infty}2^{\frac{k}{2}}\rho_{T}(t,T_{k})\leq\gamma_{2}(T,\rho_{T})+\epsilon.\]
Similarly, let $Z_{0},Z_{1},\ldots\subset Z$ be an admissible sequence such that
\[\sup_{z\in Z}\sum_{k=0}^{\infty}2^{\frac{k}{2}}\rho_{Z}(z,Z_{k})\leq\gamma_{2}(Z,\rho_{Z})+\epsilon.\]
For notational convenience, let $T_{-1}=T_{0}$ and $Z_{-1}=Z_{0}$.

Observe that the sequence $(T_{k-1}\times Z_{k-1})_{k\geq 0}$ is admissible. For all $t\in T$ and $z\in Z$, we have
\begin{eqnarray*}
\sum_{k=0}^{\infty}2^{\frac{k}{2}}(\rho_{T}\times\rho_{Z})((t,z),T_{k-1}\times Z_{k-1})&=&
\sum_{k=0}^{\infty}2^{\frac{k}{2}}[\rho_{T}(t,T_{k-1})+\rho_{Z}(z,Z_{k-1})]\\&=&
\sum_{k=-1}^{\infty}2^{\frac{k+1}{2}}\rho_{T}(t,T_{k})+\sum_{k=-1}^{\infty}2^{\frac{k+1}{2}}\rho_{Z}(z,Z_{k})\\&\leq&
3\sum_{k=0}^{\infty}2^{\frac{k}{2}}\rho_{T}(t,T_{k})+3\sum_{k=0}^{\infty}2^{\frac{k}{2}}\rho_{Z}(z,Z_{k})\\&\leq&
3[\gamma_{2}(T,\rho_{T})+\epsilon]+3[\gamma_{2}(Z,\rho_{Z})+\epsilon].
\end{eqnarray*}
So
\[\gamma_{2}((T,Z),\rho_{T}\times\rho_{Z})\leq3\gamma_{2}(T,\rho_{T})+3\gamma_{2}(Z,\rho_{Z})+6\epsilon.\]
Since this holds for all $\epsilon>0$, the result follows.
\end{proof}
\begin{lemma}[\cite{Talagrand}, page 12-13]\label{Dudleybound}
Let $(T,\rho_{T})$ be a metric space. Then
\[\gamma_{2}(T,\rho_{T})\leq C\int_{0}^{\infty}\sqrt{\log N(T,\rho_{T},\epsilon)}\,d\epsilon,\]
where $C\geq 1$ is a universal constant.
\end{lemma}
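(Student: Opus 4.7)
The plan is to construct an admissible sequence via greedy nets and then compare the resulting sum to the Dudley integral using summation by parts. First, for each $k \geq 0$ define
\[\epsilon_k = \inf\{\epsilon > 0 : N(T,\rho_T,\epsilon) \leq N_k\},\]
and pick $T_k \subset T$ of size at most $N_k$ that is an $\epsilon_k$-net of $T$ (up to an arbitrarily small slack $\eta_k = \eta \cdot 2^{-k}$, since the infimum may not be attained). For $k = 0$ we have $N_0 = 1$, so $T_0$ is a singleton and $\epsilon_0$ equals the radius of $T$. This produces an admissible sequence with $\rho_T(t, T_k) \leq \epsilon_k + \eta_k$ for every $t \in T$, hence
\[\gamma_2(T, \rho_T) \;\leq\; \sup_{t \in T}\sum_{k \geq 0} 2^{k/2}\rho_T(t,T_k) \;\leq\; \sum_{k \geq 0} 2^{k/2}\epsilon_k + C\eta.\]

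Next I would bound $\sum_k 2^{k/2}\epsilon_k$ by the integral. The key observation is that for $\epsilon < \epsilon_k$ the defining infimum forces $N(T,\rho_T,\epsilon) > N_k$, so on each interval $(\epsilon_{k+1}, \epsilon_k)$ one has $\sqrt{\log N(T,\rho_T,\epsilon)} \geq \sqrt{\log 2}\cdot 2^{k/2}$; the case $k = 0$ is handled by the trivial $N \geq 2$ below the radius of $T$. Integrating this pointwise inequality and summing over $k \geq 0$ yields
\[\int_0^{\infty}\sqrt{\log N(T,\rho_T,\epsilon)}\,d\epsilon \;\geq\; \sqrt{\log 2}\sum_{k \geq 0}2^{k/2}(\epsilon_k - \epsilon_{k+1}).\]
Assuming $\epsilon_k \to 0$ (otherwise the integral is trivially infinite and the inequality holds), an Abel summation together with the geometric estimate $\sum_{k=0}^j 2^{k/2} \leq \tfrac{\sqrt 2}{\sqrt 2 - 1} \cdot 2^{j/2}$ converts the increment sum $\sum_k 2^{k/2}(\epsilon_k - \epsilon_{k+1})$ back into the original sum $\sum_k 2^{k/2}\epsilon_k$, giving
\[\sum_{k \geq 0} 2^{k/2}\epsilon_k \;\leq\; \frac{\sqrt 2}{(\sqrt 2 - 1)\sqrt{\log 2}}\int_0^{\infty}\sqrt{\log N(T,\rho_T,\epsilon)}\,d\epsilon.\]
Combining these bounds and letting $\eta \to 0$ would produce the claim.

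The main obstacles are purely bookkeeping: the slack $\eta_k$ needed when the infimum defining $\epsilon_k$ is not attained (one wants the sequence $(\eta_k)$ summable after multiplication by $2^{k/2}$, so $\eta_k = \eta\cdot 2^{-k}$ works since $\sum 2^{-k/2}$ converges), the separate treatment of $k = 0$ (since $\log N_0 = 0$ prevents the uniform lower bound from being invoked there), and the observation that $\epsilon_k \to 0$ holds automatically when the Dudley integral is finite. No conceptually difficult step should appear.
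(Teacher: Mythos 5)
Your argument is correct and complete: the greedy choice $\epsilon_k=\inf\{\epsilon:N(T,\rho_T,\epsilon)\le N_k\}$, the slack $\eta\cdot 2^{-k}$ for non-attained infima, the lower bound $\sqrt{\log N(T,\rho_T,\epsilon)}\ge\sqrt{\log 2}\cdot 2^{k/2}$ on $(\epsilon_{k+1},\epsilon_k)$ (with the $k=0$ case via $N\ge 2$ below the radius), and the Abel-summation step all check out, and the degenerate cases (unbounded $T$, $\epsilon_k\not\to 0$) are correctly dispatched by the integral being infinite. The paper gives no proof of this lemma — it simply cites Talagrand — and your argument is precisely the standard one from that source, so there is nothing to compare beyond noting agreement.
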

Next we bound the covering number of a set of 1-Lipschitz functions with respect to a certain norm (see Lemma \ref{fscovering} below). This will be needed when we apply Lemma \ref{Dudleybound} to bound the $\gamma_{2}$ quantity for that metric space of $1$-Lipschitz functions. Before we do that, we need a basic result.

In the sequel, the readers who are interested in the max-sliced Wasserstein distances but not the general projection robust Wasserstein distances may take $s=1$ in the rest of this paper. This will be enough to prove the main results mentioned in the introduction section.
\begin{lemma}\label{basiclipcover}
Let $a>0$. Let
\[D=\{h:[-a,a]^{s}\to\mathbb{R}|\,h\text{ is 1-Lipschitz and }h(0)=0\}.\]
Then
\[N(D,\|\,\|_{\infty},\epsilon)\leq\exp\left(\left(\frac{Ca\sqrt{s}}{\epsilon}\right)^{s}\right),\]
for all $\epsilon>0$, where $C\geq 1$ is a universal constant.
\end{lemma}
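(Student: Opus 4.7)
The plan is to build an explicit $\epsilon$-net for $D$ by discretizing both the domain, via a cubic lattice of mesh $\delta$, and the range, via multiples of $\eta$, with $\delta$ and $\eta$ chosen proportional to $\epsilon$. Since every $h\in D$ satisfies $|h(x)|=|h(x)-h(0)|\leq\|x\|_{2}\leq a\sqrt{s}$, the trivial case $a\sqrt{s}<\epsilon$ is handled by the one-element net $\{0\}$; so I may assume $a\sqrt{s}\geq\epsilon$, which is exactly the regime where the claimed bound is nontrivial.

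Take $\delta=\epsilon/(2\sqrt{s})$ and $\eta=\epsilon/2$, and let $G=\delta\mathbb{Z}^{s}\cap[-a,a]^{s}$, a set of size at most $(5a\sqrt{s}/\epsilon)^{s}$ that contains $0$. For each $h\in D$, round the values $\{h(g)\}_{g\in G}$ to the nearest multiples of $\eta$ to obtain $\tilde h\colon G\to\eta\mathbb{Z}$, and extend $\tilde h$ to $[-a,a]^{s}$ by nearest-neighbor assignment on the lattice. A two-term triangle inequality, combining 1-Lipschitzness of $h$ (to pass from $h(x)$ to $h(g(x))$, with error at most $\delta\sqrt{s}/2=\epsilon/4$) with the rounding step (error at most $\eta/2=\epsilon/4$), gives $\|h-\tilde h\|_{\infty}\leq\epsilon/2<\epsilon$. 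So $\{\tilde h\}$ is an $\epsilon$-net, and it only remains to count the admissible $\tilde h$.

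The counting step is where the Lipschitz hypothesis pays: enumerate $G$ via a spanning tree of the lattice-adjacency graph rooted at $0$ so that each $g_{k}$ ($k\geq 1$) has a parent $g_{j(k)}$ at Euclidean distance exactly $\delta$. The root value $\tilde h(0)=0$ is forced. For every subsequent $g_{k}$, the increment $\tilde h(g_{k})-\tilde h(g_{j(k)})$ is a multiple of $\eta$ of magnitude at most $\delta+\eta\leq 2\eta$ (using $\delta\leq\eta$ for $s\geq 1$), so it takes one of at most $5$ values. Therefore there are at most $5^{|G|-1}$ functions $\tilde h$, giving
\[\log N(D,\|\,\|_{\infty},\epsilon)\leq|G|\log 5\leq\left(\frac{Ca\sqrt{s}}{\epsilon}\right)^{s}\]
for a universal constant $C$ (one can absorb $\log 5$ into $C$ since $(\log 5)^{1/s}\leq\log 5$ for $s\geq 1$).

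There is no deep obstacle here; this is a standard Kolmogorov--Tikhomirov--style covering argument. The one point requiring attention is the calibration $\delta\leq\eta$: if $\delta$ were chosen significantly larger than $\eta$, the per-increment count would grow with $\delta/\eta$ and the log-covering number would pick up an extra factor beyond $|G|$. Balancing the Lipschitz step and the rounding step so that both contribute $\epsilon/4$ to the $\|\,\|_{\infty}$ error, while keeping $\delta\leq\eta$, is what produces the correct exponent $s$ on $a\sqrt{s}/\epsilon$ without any parasitic logarithmic factors.
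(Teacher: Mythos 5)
Your proof is correct. It differs from the paper's only in that the paper rescales the domain to $[-1,1]^{s}$, observes that a $1$-Lipschitz function for the Euclidean metric is $a\sqrt{s}$-Lipschitz for the $\ell_{\infty}$ metric on the cube, and then cites the classical Kolmogorov--Tikhomirov covering bound for Lipschitz functions (Wainwright, p.~129); you instead prove that classical bound from scratch via the standard lattice-plus-rounding construction. Your version is therefore self-contained where the paper's is not, at the cost of length; the substance (mesh $\delta\sim\epsilon/\sqrt{s}$ in the domain, $\eta\sim\epsilon$ in the range, and a spanning-tree count of $O(1)$ choices per increment) is exactly what underlies the cited result. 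Two harmless details worth tightening if you write this up: near the boundary of $[-a,a]^{s}$ the nearest point of $G$ can be up to $\delta$ (not $\delta/2$) away in a coordinate, so the Lipschitz error term is at most $\sqrt{s}\,\delta=\epsilon/2$ rather than $\epsilon/4$, which still yields $\|h-\tilde h\|_{\infty}\le 3\epsilon/4<\epsilon$; and one should note explicitly that $G$ is a discrete box, hence connected under lattice adjacency and containing $0$, so the rooted spanning tree exists.
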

\begin{proof}
The map $h\to(x\mapsto h(ax))$ defines an isometry from the metric space $(D,\|\,\|_{\infty})$ to the metric space $(\widetilde{D},\|\,\|_{\infty})$, where
\[\widetilde{D}=\{h:[-1,1]^{s}\to\mathbb{R}|\,h\text{ is }a\text{-Lipschitz and }h(0)=0\}.\]
So
\[N(D,\|\,\|_{\infty},\epsilon)=N(\widetilde{D},\|\,\|_{\infty},\epsilon).\]
Since
\begin{align*}
\widetilde{D}\subset\widehat{D}:=\{h:[-1,1]^{s}\to\mathbb{R}:\,&h(0)=0\text{ and }\\&
|h(x)-h(y)|\leq a\sqrt{s}\max_{i}|x_{i}-y_{i}|\;\forall x,y\in[-1,1]^{s}\}
\end{align*}
and it is well known (see, e.g., \cite[page 129]{Wainwright}) that $N(\widehat{D},\|\,\|_{\infty},\epsilon)\leq\exp((\frac{Ca\sqrt{s}}{\epsilon})^{s})$, by (\ref{coversubset}), it follows that
\[N(\widetilde{D},\|\,\|_{\infty},\epsilon)\leq N\left(\widehat{D},\|\,\|_{\infty},\frac{\epsilon}{2}\right)\leq\exp\left(\left(\frac{Ca\sqrt{s}}{\epsilon/2}\right)^{s}\right).\]
So the result follows.
\end{proof}

\begin{lemma}\label{fscovering}
Let $T$ be the set of all $1$-Lipschitz functions $f:\mathbb{R}^{s}\to\mathbb{R}$ with $f(0)=0$. For $0<\delta\leq 1$, define the norm $\|\,\|_{(\delta)}$ on $T$ by
\begin{equation}\label{starnorm}
\|f\|_{(\delta)}=\sup_{x\in\mathbb{R}^{s}}\frac{|f(x)|}{\|x\|_{2}^{1+\delta}+1}.
\end{equation}
Then
\[\log N(T,\|\,\|_{(\delta)},\epsilon)\leq\left(\frac{C\sqrt{s}}{\epsilon}\right)^{s}\frac{1}{\delta},\]
for all $\epsilon>0$ and $0<\delta\leq 1$, where $C\geq 1$ is a universal constant.
\end{lemma}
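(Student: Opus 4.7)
The plan is a multi-scale chaining argument: I cover $T$ by piecewise approximations on dyadic shells, with shell-dependent $\|\,\|_{\infty}$-resolution calibrated to the weight $\|x\|_{2}^{1+\delta}+1$ appearing in (\ref{starnorm}).

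Set $R_{k}=2^{k}$ for $k\geq 0$, and fix the truncation level $K=\lceil\log_{2}(1/\epsilon)/\delta\rceil$, so that for $\|x\|_{2}\geq R_{K}$ the trivial bound $|f(x)|\leq\|x\|_{2}$ (valid for every $f\in T$, since $f(0)=0$) gives $|f(x)|/(\|x\|_{2}^{1+\delta}+1)\leq R_{K}^{-\delta}\leq\epsilon$. On the shell $\{R_{k-1}\leq\|x\|_{2}\leq R_{k}\}$ with $k\geq 1$ the denominator is at least $R_{k-1}^{1+\delta}$, so any $\|\,\|_{\infty}$-approximation at resolution $\eta_{k}:=\epsilon\cdot 2^{(k-1)(1+\delta)}$ contributes at most $\epsilon$ to $\|\,\|_{(\delta)}$ on that shell; on $B(0,1)$ I use $\eta_{0}=\epsilon$.

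For each $0\leq k\leq K$, Lemma \ref{basiclipcover} with $a=R_{k}$ provides a net $\mathcal{N}_{k}$ of $1$-Lipschitz functions on $[-R_{k},R_{k}]^{s}$ vanishing at $0$ with $\log|\mathcal{N}_{k}|\leq (CR_{k}\sqrt{s}/\eta_{k})^{s}=(C\sqrt{s}/\epsilon)^{s}\cdot 2^{s(1-(k-1)\delta)}$ (interpret the exponent as $0$ when $k=0$). Given $f\in T$, I pick $h_{k}\in\mathcal{N}_{k}$ approximating $f$ on $[-R_{k},R_{k}]^{s}$ and define a candidate $g:\mathbb{R}^{s}\to\mathbb{R}$ piecewise: $g=h_{k}$ on $B(0,R_{k})\setminus B(0,R_{k-1})$ for $1\leq k\leq K$, $g=h_{0}$ on $B(0,1)$, and $g=0$ outside $B(0,R_{K})$. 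By the choice of the $\eta_{k}$ and the trivial tail bound, $\|f-g\|_{(\delta)}\leq\epsilon$, so the collection of piecewise $g$'s is an $\epsilon$-cover of size at most $\prod_{k=0}^{K}|\mathcal{N}_{k}|$.

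Taking logarithms and summing,
\[\log N(T,\|\,\|_{(\delta)},\epsilon)\leq\sum_{k=0}^{K}\log|\mathcal{N}_{k}|\leq(C\sqrt{s}/\epsilon)^{s}\left(1+2^{s}\sum_{j=0}^{K-1}2^{-js\delta}\right).\]
The main obstacle is to bound the bracketed quantity by $O(1/\delta)$ uniformly in $s$, $\epsilon$, $K$. I split into two cases: when $s\delta\geq 1$, the geometric series is bounded by a universal constant, and the factor $2^{s}$ gets absorbed into the base $(C\sqrt{s}/\epsilon)^{s}$, with $1/\delta\geq 1$ then making the bound trivial; when $s\delta<1$, the elementary estimate $1-2^{-s\delta}\geq c\, s\delta$ gives the sum $\leq C/(s\delta)\leq C/\delta$ since $s\geq 1$. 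Finally, the piecewise $g$'s above are not themselves in $T$, so a standard argument (pass each $g$ to a nearest element of $T$) produces an $\epsilon$-cover inside $T$ at the cost of a factor of $2$ absorbed into $C$, yielding the stated bound.
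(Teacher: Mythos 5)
Your proof is correct and follows essentially the same route as the paper's: a dyadic shell decomposition of $\mathbb{R}^{s}$, a per-shell $\|\,\|_{\infty}$-resolution calibrated to the weight $\|x\|_{2}^{1+\delta}+1$ via Lemma \ref{basiclipcover}, truncation at radius $2^{K}$ with $K\approx\frac{1}{\delta}\log_{2}\frac{1}{\epsilon}$ using $|f(x)|\leq\|x\|_{2}$, and summation of the resulting geometric series to produce the $1/\delta$ factor (the paper glues via a product-metric-space identification plus Kirszbraun extension and packing numbers, while you glue the net functions piecewise and then project back into $T$, but these are the same argument in substance).
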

\begin{proof}
Set $\Omega_{0}=\{x\in\mathbb{R}^{s}:\,\|x\|_{2}\leq 1\}$, and for $j\in\mathbb{N}$, set
\[\Omega_{j}=\{x\in\mathbb{R}^{s}:\,2^{j-1}\leq\|x\|_{2}\leq 2^{j}\}\cup\{0\}.\]
Let
\[A_{j}=\{h:\Omega_{j}\to\mathbb{R}|\,h\text{ is 1-Lipschitz and }h(0)=0\}.\]
Define the following norm $\|\,\|_{(\delta),j}$ on $A_{j}$:
\[\|h\|_{(\delta),j}=\sup_{x\in\Omega_{j}}\frac{|h(x)|}{\|x\|_{2}^{1+\delta}+1}\quad\text{for }h\in A_{j}.\]
For every $f\in T$, observe that the restriction $f|_{\Omega_{j}}\in A_{j}$ and
\[\|f\|_{(\delta)}=\sup_{j\geq 0}\left\|f|_{\Omega_{j}}\right\|_{(\delta),j}.\]
Thus, $(T,\|\,\|_{(\delta)})$ can be identified as a metric subspace of the product metric space $\prod_{j=0}^{\infty}(A_{j},\|\,\|_{(\delta),j})$. So by (\ref{coversubset}), the $\epsilon$-covering number of $(T,\|\,\|_{(\delta)})$ is bounded by the $\frac{\epsilon}{2}$-covering number of $\prod_{j=0}^{\infty}(A_{j},\|\,\|_{(\delta),j})$. Hence,
\begin{equation}\label{fscoveringeq}
N(T,\|\,\|_{(\delta)},\epsilon)\leq\prod_{j=0}^{\infty}N\left(A_{j},\|\,\|_{(\delta),j},\frac{\epsilon}{2}\right).
\end{equation}
Note that for all $j\geq 1+\frac{1}{\delta}\log_{2}\frac{2}{\epsilon}$ and $h\in A_{j}$, we have
\[\|h\|_{(\delta),j}=\sup_{x\in\Omega_{j}}\frac{|h(x)|}{\|x\|_{2}^{1+\delta}+1}\leq\sup_{x\in\Omega_{j}\backslash\{0\}}\frac{\|x\|_{2}}{\|x\|_{2}^{1+\delta}+1}\leq\sup_{x\in\Omega_{j}\backslash\{0\}}\|x\|_{2}^{-\delta}\leq 2^{-\delta(j-1)}\leq\frac{\epsilon}{2}.\]
So $N(A_{j},\|\,\|_{(\delta),j},\frac{\epsilon}{2})=1$ for all $j\geq1+\frac{1}{\delta}\log_{2}\frac{2}{\epsilon}$.

For $j\geq 0$, let
\begin{equation}\label{djdef}
D_{j}=\{h:[-2^{j},2^{j}]^{s}\to\mathbb{R}|\,h\text{ is 1-Lipschitz and }h(0)=0\}.
\end{equation}
Note that $\Omega_{j}\subset[-2^{j},2^{j}]^{s}$. Every function $h\in A_{j}$ can be extended to a function $\tau(h)\in D_{j}$ (by Kirszbraun extension), where
\[[\tau(h)](x)=\inf_{y\in\Omega_{j}}(h(y)+\|x-y\|_{2})\quad\text{for }x\in[-2^{j},2^{j}]^{s}.\]
(Note that $\tau(0)$ is not the zero function, but $[\tau(h)](0)=0$.) For all $h_{1},h_{2}\in A_{j}$ with $j\geq 0$,
\begin{eqnarray*}
\|h_{1}-h_{2}\|_{(\delta),j}&=&\sup_{x\in\Omega_{j}\backslash\{0\}}\frac{|h_{1}(x)-h_{2}(x)|}{\|x\|_{2}^{1+\delta}+1}\\&\leq&
\sup_{x\in\Omega_{j}\backslash\{0\}}\frac{|h_{1}(x)-h_{2}(x)|}{2^{(j-1)(1+\delta)}}
\\&\leq&
2^{(1-j)(1+\delta)}\sup_{x\in[-2^{j},2^{j}]^{s}}|[\tau(h_{1})](x)-[\tau(h_{2})](x)|\\&=&
2^{(1-j)(1+\delta)}\|\tau(h_{1})-\tau(h_{2})\|_{\infty},
\end{eqnarray*}
where $\displaystyle\|h\|_{\infty}=\sup_{x\in[-2^{j},2^{j}]^{s}}|h(x)|$ for $h\in D_{j}$. So for all $j\geq 0$,
\begin{eqnarray*}
N(A_{j},\|\,\|_{(\delta),j},\epsilon)&\leq&
N_{\mathrm{pack}}(A_{j},\|\,\|_{(\delta),j},\epsilon)\\&\leq& N_{\mathrm{pack}}(D_{j},\|\,\|_{\infty},2^{(j-1)(1+\delta)}\epsilon)\\&\leq&
N\left(D_{j},\|\,\|_{\infty},2^{(j-1)(1+\delta)}\frac{\epsilon}{2}\right)\text{ by }(\ref{coverpack})\\&\leq&
\exp\left(\left(\frac{C\cdot 2^{j}\sqrt{s}}{2^{(j-1)(1+\delta)}\epsilon/2}\right)^{s}\right)=\exp\left(\left(\frac{2C\sqrt{s}}{\epsilon}\right)^{s}2^{s(1+\delta-j\delta)}\right),
\end{eqnarray*}
where the last inequality follows from Lemma \ref{basiclipcover}. Therefore, by (\ref{fscoveringeq}),
\[\log N(T,\|\,\|_{(\delta)},\epsilon)\leq\sum_{j=0}^{\infty}\left(\frac{4C\sqrt{s}}{\epsilon}\right)^{s}2^{s(1+\delta-j\delta)}.\]
But
\[\sum_{j=0}^{\infty}2^{s(1+\delta-j\delta)}=\frac{2^{s(1+\delta)}}{1-2^{-s\delta}}\leq \frac{2^{2s}}{1-2^{-\delta}}\leq C\cdot\frac{2^{2s}}{\delta},\]
since $0<\delta\leq 1$. So the result follows.
\end{proof}
The following result is the main lemma of this section. We bound the expected supremum of the Gaussian process that arises when we use Gaussian symmetrization to prove Theorem \ref{mainsampling}. The key ingredient in proving this lemma is Talagrand's majorizing measure theorem.
\begin{lemma}\label{mainlemma}
Let $0<\delta\leq 1$. Suppose that $E$ is a Banach space with separable dual $E^{*}$ and $x_{1},\ldots,x_{n}\in E$. Let $g_{1},\ldots,g_{n}$ be i.i.d.~standard Gaussian random variables. Let $T$ be the set of all $1$-Lipschitz functions $f:\mathbb{R}^{s}\to\mathbb{R}$ with $f(0)=0$. Then
\begin{align*}
&\mathbb{E}\sup_{\substack{v_{1}^{*},\ldots,v_{s}^{*}\in B_{E^{*}}\\f\in T}}\left|\frac{1}{n}\sum_{i=1}^{n}g_{i}f(v_{1}^{*}(x_{i}),\ldots,v_{s}^{*}(x_{i}))\right|\\\leq&
\frac{Cs}{n}\mathbb{E}\left\|\sum_{i=1}^{n}g_{i}x_{i}\right\|+\frac{CM\sqrt{s}}{\sqrt{n}}\cdot\begin{cases}(\delta n)^{-\frac{1}{2}},&s=1\\(\ln(\delta n+2))\cdot(\delta n)^{-\frac{1}{2}},&s=2\\(\delta n)^{-\frac{1}{s}},&s\geq 3\end{cases},
\end{align*}
where
\begin{equation}\label{mdef}
M=\sqrt{2}\left(n+s^{1+\delta}\sup_{v^{*}\in B_{E^{*}}}\sum_{i=1}^{n}|v^{*}(x_{i})|^{2+2\delta}\right)^{\frac{1}{2}},
\end{equation}
and $B_{E^{*}}=\{v^{*}\in E^{*}:\,\|v^{*}\|\leq 1\}$.
\end{lemma}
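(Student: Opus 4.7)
The plan is to invoke Talagrand's majorizing measure theorem (the upper bound in (\ref{gc})) on the centered Gaussian process $Z_{(v^{*},f)}=\frac{1}{n}\sum_{i=1}^{n}g_{i}f(v_{1}^{*}(x_{i}),\ldots,v_{s}^{*}(x_{i}))$ indexed by $(v^{*},f)\in (B_{E^{*}})^{s}\times T$. Since the index set is symmetric in $f$ (negation sends $T\to T$), the expected supremum of $|Z|$ coincides with that of $Z$, so it is bounded by $C\gamma_{2}$ of the index set in the intrinsic metric $\rho((v^{*},f),(w^{*},h))=\frac{1}{n}(\sum_{i}(f(v^{*}(x_{i}))-h(w^{*}(x_{i})))^{2})^{1/2}$. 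I would bound this metric via the triangle decomposition $f(v^{*}(x_{i}))-h(w^{*}(x_{i}))=[f(v^{*}(x_{i}))-f(w^{*}(x_{i}))]+[f(w^{*}(x_{i}))-h(w^{*}(x_{i}))]$, where the 1-Lipschitz property of $f$ controls the first bracket. This yields $\rho\leq\rho_{1}+\rho_{2}$, with $\rho_{1}(v^{*},w^{*})=\frac{1}{n}(\sum_{i}\|v^{*}(x_{i})-w^{*}(x_{i})\|_{2}^{2})^{1/2}$ on $(B_{E^{*}})^{s}$ and $\rho_{2}(f,h)=\sup_{w^{*}}\frac{1}{n}(\sum_{i}(f(w^{*}(x_{i}))-h(w^{*}(x_{i})))^{2})^{1/2}$ on $T$; Lemma \ref{productgamma} then gives $\gamma_{2}(\rho)\leq 3\gamma_{2}(\rho_{1})+3\gamma_{2}(\rho_{2})$.

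For the $\rho_{1}$ piece, I would recognize $\rho_{1}^{2}=\frac{1}{n^{2}}\sum_{i,j}((v_{j}^{*}-w_{j}^{*})(x_{i}))^{2}$ as the intrinsic metric of the mean-zero Gaussian process $Y_{v^{*}}=\frac{1}{n}\sum_{i,j}g_{ij}v_{j}^{*}(x_{i})$. The lower-bound direction of (\ref{gc}) gives $\gamma_{2}(\rho_{1})\lesssim\mathbb{E}\sup_{v^{*}}Y_{v^{*}}$, and since the $s$ coordinates $v_{j}^{*}$ vary independently in $B_{E^{*}}$, the supremum separates and evaluates to $\frac{s}{n}\mathbb{E}\|\sum_{i}g_{i}x_{i}\|$, producing the first term on the right-hand side. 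For the $\rho_{2}$ piece, the key comparison is $\rho_{2}(f,h)\leq\frac{M}{n}\|f-h\|_{(\delta)}$: expanding $|f(y)-h(y)|$ using (\ref{starnorm}), then applying $(\|w^{*}(x_{i})\|_{2}^{1+\delta}+1)^{2}\leq 2\|w^{*}(x_{i})\|_{2}^{2+2\delta}+2$, the power-mean bound $(\sum_{j}|w_{j}^{*}(x_{i})|^{2})^{1+\delta}\leq s^{\delta}\sum_{j}|w_{j}^{*}(x_{i})|^{2+2\delta}$, and decoupling the supremum across the independent components $w_{j}^{*}$, assembles into $\sup_{w^{*}}\sum_{i}(\|w^{*}(x_{i})\|_{2}^{1+\delta}+1)^{2}\leq M^{2}$ by the definition (\ref{mdef}). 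Hence $\gamma_{2}(T,\rho_{2})\leq\frac{M}{n}\gamma_{2}(T,\|\cdot\|_{(\delta)})$, and I would estimate $\gamma_{2}(T,\|\cdot\|_{(\delta)})$ by Dudley's integral (Lemma \ref{Dudleybound}) paired with the covering bound of Lemma \ref{fscovering}.

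The main obstacle lies in this last step: the resulting Dudley integral $\int_{0}^{\mathrm{diam}}\sqrt{(C\sqrt{s}/\epsilon)^{s}/\delta}\,d\epsilon$ converges only for $s=1$, giving cleanly the claimed $(\delta n)^{-1/2}$ rate; for $s=2$ its integrand is $\sim\epsilon^{-1}$ and diverges logarithmically at the origin, and for $s\geq 3$ it diverges polynomially. This is exactly what forces the $\ln(\delta n+2)$ factor at $s=2$ and the slower rate $(\delta n)^{-1/s}$ at $s\geq 3$. The fix is to work directly with the tighter metric $\rho_{2}$ on $T$, truncating the generic chaining at a scale $\epsilon_{\star}$ and absorbing the unresolved finer scales using the complementary bound $\rho_{2}(f,h)\leq\frac{1}{\sqrt{n}}\|f-h\|_{\infty}$ on the bounded evaluation domain $\{w^{*}(x_{i}):w^{*}\in (B_{E^{*}})^{s},\,i\leq n\}$. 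Balancing the two contributions forces $\epsilon_{\star}\sim(\delta n)^{-1/2}$ at $s=2$ (producing the $\ln(\delta n+2)$ factor from the truncated logarithmic Dudley integral) and $\epsilon_{\star}\sim(\delta n)^{-1/s}$ for $s\geq 3$. Combining the two $\gamma_{2}$ estimates with the prefactor $\frac{M}{n}$ and rewriting in the form $\frac{M\sqrt{s}}{\sqrt{n}}\cdot(\text{factor})$ yields the announced inequality.
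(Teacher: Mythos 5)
Your proposal is correct and follows essentially the same route as the paper: the same Gaussian process, the same split of the intrinsic metric into a linear part (handled by Gaussian comparison, giving $\frac{Cs}{n}\mathbb{E}\|\sum_{i}g_{i}x_{i}\|$) and a function part (handled by Lemma \ref{fscovering} and a Dudley integral truncated at a scale $b$), with the same balancing $b\sim(\delta n)^{-1/2}$ for $s\le 2$ and $b\sim(\delta n)^{-1/s}$ for $s\ge 3$. The only difference is in how the scales below $b$ are absorbed: the paper first passes to a finite $\|\cdot\|_{(\delta)}$-net of $T$ and pays $\frac{bM}{\sqrt{n}}$ via Cauchy--Schwarz against $\|(g_{1},\ldots,g_{n})\|_{2}$, which is the cost your balancing implicitly assumes and is the right one to use there (the $\frac{1}{\sqrt{n}}\|f-h\|_{\infty}$ bound you mention for that step is lossier and would degrade the $s\ge 2$ rates).
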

\begin{proof}
Let $Z=\{(v_{1}^{*},\ldots,v_{s}^{*}):\,v_{1}^{*},\ldots,v_{s}^{*}\in B_{E^{*}}\}$. Define the Gaussian process $(X_{f,z})_{(f,z)\in T\times Z}$ as follows. If $f\in T$ and $z=(v_{1}^{*},\ldots,v_{s}^{*})\in Z$, then
\[X_{f,z}=\sum_{i=1}^{n}g_{i}f(v_{1}^{*}(x_{i}),\ldots,v_{s}^{*}(x_{i})).\]
Recall that $\|\,\|_{(\delta)}$ is defined in (\ref{starnorm}). For $f,h\in T$ and $(v_{1}^{*},\ldots,v_{s}^{*})\in Z$, we have
\begin{align}\label{fhbound}
&\|\{f(v_{1}^{*}(x_{i}),\ldots,v_{s}^{*}(x_{i}))\}_{1\leq i\leq n}-\{h(v_{1}^{*}(x_{i}),\ldots,v_{s}^{*}(x_{i}))\}_{1\leq i\leq n}\|_{2}\\=&
\left(\sum_{i=1}^{n}|f(v_{1}^{*}(x_{i}),\ldots,v_{s}^{*}(x_{i}))-h(v_{1}^{*}(x_{i}),\ldots,v_{s}^{*}(x_{i}))|^{2}\right)^{\frac{1}{2}}\nonumber\\\leq&
\|f-h\|_{(\delta)}\left(\sum_{i=1}^{n}\left[1+\|(v_{1}^{*}(x_{i}),\ldots,v_{s}^{*}(x_{i}))\|_{2}^{1+\delta}\right]^{2}\right)^{\frac{1}{2}}\nonumber\\\leq&
\|f-h\|_{(\delta)}\left(\sum_{i=1}^{n}2\left[1+\|(v_{1}^{*}(x_{i}),\ldots,v_{s}^{*}(x_{i}))\|_{2}^{2+2\delta}\right]\right)^{\frac{1}{2}}\nonumber\\\leq&
\|f-h\|_{(\delta)}\sqrt{2}\left(\sum_{i=1}^{n}\left[1+s^{\delta}(|v_{1}^{*}(x_{i})|^{2+2\delta}+\ldots+|v_{s}^{*}(x_{i})|^{2+2\delta})\right]\right)^{\frac{1}{2}}\nonumber\\&\text{by H\"older's inequality with }p=1+\delta\nonumber\\\leq&
\|f-h\|_{(\delta)}\sqrt{2}\left(n+s^{1+\delta}\sup_{v^{*}\in B_{E^{*}}}\sum_{i=1}^{n}|v^{*}(x_{i})|^{2+2\delta}\right)^{\frac{1}{2}}\nonumber\\=&
M\|f-h\|_{(\delta)},\nonumber
\end{align}
where $M>0$ is defined in (\ref{mdef}).

Fix $b>0$. Let $T^{(b)}\subset T$ be a $b$-covering of $T$ with respect to $\|\,\|_{(\delta)}$ that has the smallest size, i.e., $|T^{(b)}|=N(T,\|\,\|_{(\delta)},b)$. %(We will see that when $s=1$, we can take $b=0$ so that $T^{(b)}=T$.)
For every $f\in T$, there exists $h\in T^{(b)}$ such that $\|f-h\|_{(\delta)}\leq b$ so by (\ref{fhbound}),
\[\|\{f(v_{1}^{*}(x_{i}),\ldots,v_{s}^{*}(x_{i}))\}_{1\leq i\leq n}-\{h(v_{1}^{*}(x_{i}),\ldots,v_{s}^{*}(x_{i}))\}_{1\leq i\leq n}\|_{2}\leq bM,\]
for all $v_{1}^{*},\ldots,v_{s}^{*}\in B_{E^{*}}$. So
\begin{align*}
&\sup_{v_{1}^{*},\ldots,v_{s}^{*}\in B_{E^{*}}}\frac{1}{n}\sum_{i=1}^{n}g_{i}f(v_{1}^{*}(x_{i}),\ldots,v_{s}^{*}(x_{i}))\\\leq&
\sup_{v_{1}^{*},\ldots,v_{s}^{*}\in B_{E^{*}}}\frac{1}{n}\sum_{i=1}^{n}g_{i}h(v_{1}^{*}(x_{i}),\ldots,v_{s}^{*}(x_{i}))+\frac{1}{n}\|(g_{1},\ldots,g_{n})\|_{2}\cdot bM.
\end{align*}
So since $T=-T$, we have
\begin{align}\label{discretize}
&\mathbb{E}\sup_{\substack{v_{1}^{*},\ldots,v_{s}^{*}\in B_{E^{*}}\\f\in T}}\left|\frac{1}{n}\sum_{i=1}^{n}g_{i}f(v_{1}^{*}(x_{i}),\ldots,v_{s}^{*}(x_{i}))\right|\\=&
\mathbb{E}\sup_{\substack{v_{1}^{*},\ldots,v_{s}^{*}\in B_{E^{*}}\\f\in T}}\frac{1}{n}\sum_{i=1}^{n}g_{i}f(v_{1}^{*}(x_{i}),\ldots,v_{s}^{*}(x_{i}))\nonumber
\\\leq&
\mathbb{E}\sup_{\substack{v_{1}^{*},\ldots,v_{s}^{*}\in B_{E^{*}}\\h\in T^{(b)}}}\frac{1}{n}\sum_{i=1}^{n}g_{i}h(v_{1}^{*}(x_{i}),\ldots,v_{s}^{*}(x_{i}))+\mathbb{E}\frac{1}{n}\|(g_{1},\ldots,g_{n})\|_{2}\cdot bM\nonumber\\\leq&
\mathbb{E}\sup_{\substack{v_{1}^{*},\ldots,v_{s}^{*}\in B_{E^{*}}\\h\in T^{(b)}}}\frac{1}{n}\sum_{i=1}^{n}g_{i}h(v_{1}^{*}(x_{i}),\ldots,v_{s}^{*}(x_{i}))+\frac{bM}{\sqrt{n}}\nonumber\\&\quad(\text{since }\mathbb{E}\|(g_{1},\ldots,g_{n})\|_{2}\leq(\mathbb{E}\|(g_{1},\ldots,g_{n})\|_{2}^{2})^{\frac{1}{2}}=\sqrt{n})\nonumber\\=&
\frac{1}{n}\mathbb{E}\sup_{(h,z)\in T^{(b)}\times Z}X_{h,z}+\frac{bM}{\sqrt{n}},\nonumber
\end{align}
where $X_{h,z}$ is defined at the beginning of this proof.

For $f,h\in T$ and $z=(v_{1}^{*},\ldots,v_{s}^{*})\in Z$, $\widetilde{z}=(w_{1}^{*},\ldots,w_{s}^{*})\in Z$, we have
\begin{align}\label{xdiffbound}
&\left(\mathbb{E}|X_{f,z}-X_{h,\widetilde{z}}|^{2}\right)^{\frac{1}{2}}\\=&
\|\{f(v_{1}^{*}(x_{i}),\ldots,v_{s}^{*}(x_{i}))\}_{1\leq i\leq n}-\{h(w_{1}^{*}(x_{i}),\ldots,w_{s}^{*}(x_{i}))\}_{1\leq i\leq n}\|_{2}\nonumber\\\leq&
\|\{f(v_{1}^{*}(x_{i}),\ldots,v_{s}^{*}(x_{i}))\}_{1\leq i\leq n}-\{h(v_{1}^{*}(x_{i}),\ldots,v_{s}^{*}(x_{i}))\}_{1\leq i\leq n}\|_{2}\nonumber\\&
+\|\{h(v_{1}^{*}(x_{i}),\ldots,v_{s}^{*}(x_{i}))\}_{1\leq i\leq n}-\{h(w_{1}^{*}(x_{i}),\ldots,w_{s}^{*}(x_{i}))\}_{1\leq i\leq n}\|_{2}\nonumber\\\leq&
M\|f-h\|_{(\delta)}+\left(\sum_{i=1}^{n}|h(v_{1}^{*}(x_{i}),\ldots,v_{s}^{*}(x_{i}))-h(w_{1}^{*}(x_{i}),\ldots,w_{s}^{*}(x_{i}))|^{2}\right)^{\frac{1}{2}}\nonumber\\\leq&
M\|f-h\|_{(\delta)}+\left(\sum_{i=1}^{n}\sum_{j=1}^{s}|v_{j}^{*}(x_{i})-w_{j}^{*}(x_{i})|^{2}\right)^{\frac{1}{2}},\nonumber
\end{align}
where the second inequality follows from (\ref{fhbound}) and the last inequality follows from $h$ being 1-Lipschitz. Recall that $M>0$ is defined in (\ref{mdef}) and $\|\,\|_{(\delta)}$ is defined in (\ref{starnorm}). Consider the metric $\rho_{T}(f,h)=M\|f-h\|_{(\delta)}$ on $T$. Also, define the metric $\rho_{Z}$ on $Z$ by
\[\rho_{Z}((v_{1}^{*},\ldots,v_{s}^{*}),(w_{1}^{*},\ldots,w_{s}^{*}))=\left(\sum_{i=1}^{n}\sum_{j=1}^{s}|v_{j}^{*}(x_{i})-w_{j}^{*}(x_{i})|^{2}\right)^{\frac{1}{2}}.\]
Then by (\ref{xdiffbound}), we have
\[\left(\mathbb{E}|X_{f,z}-X_{h,\widetilde{z}}|^{2}\right)^{\frac{1}{2}}\leq\rho_{T}(f,h)+\rho_{Z}(z_{1},z_{2}),\]
for all $(f,z),(h,\widetilde{z})\in T\times Z$. So by (\ref{gc}) and Lemma \ref{productgamma},
\begin{equation}\label{split}
\mathbb{E}\sup_{(f,z)\in T^{(b)}\times Z}X_{f,z}\leq C\gamma_{2}(T^{(b)}\times Z,\rho_{T}\times\rho_{Z})\leq3C\gamma_{2}(T^{(b)},\rho_{T})+3C\gamma_{2}(Z,\rho_{Z}).
\end{equation}
Let's bound each of these two terms. For the first term, by Lemma \ref{Dudleybound},
\begin{align}\label{tbbound}
&\gamma_{2}(T^{(b)},\rho_{T})\\\leq& C\int_{0}^{\infty}\sqrt{\log N(T^{(b)},\rho_{T},\epsilon)}\,d\epsilon\nonumber\\=&
C\int_{0}^{\infty}\sqrt{\log N(T^{(b)},\|\,\|_{(\delta)},\frac{\epsilon}{M})}\,d\epsilon\nonumber\\=&
CM\int_{0}^{\infty}\sqrt{\log N(T^{(b)},\|\,\|_{(\delta)},\epsilon)}\,d\epsilon\nonumber\\\leq&
CM\left(\int_{b}^{\infty}\sqrt{\log N(T^{(b)},\|\,\|_{(\delta)},\epsilon)}\,d\epsilon+b\sqrt{\log|T^{(b)}|}\right)\nonumber\\\leq&
CM\left(\int_{b}^{\infty}\sqrt{\log N(T,\|\,\|_{(\delta)},\frac{\epsilon}{2})}\,d\epsilon+b\sqrt{\log|T^{(b)}|}\right)\nonumber\\\leq&
CM\int_{\frac{b}{2}}^{\infty}\sqrt{\log N(T,\|\,\|_{(\delta)},\epsilon)}\,d\epsilon,\nonumber
\end{align}
where the second last inequality follows from $T^{(b)}\subset T$ and (\ref{coversubset}), and the last inequality follows from $|T^{(b)}|=N(T,\|\,\|_{(\delta)},b)$ (by definition of $T^{(b)}$) and $b=2\int_{\frac{b}{2}}^{b}1\,d\epsilon$.

We now bound the other term in (\ref{split}). Let $(g_{i,j})_{1\leq i\leq n,\,1\leq j\leq s}$ be i.i.d.~standard Gaussian random variables. Define the Gaussian process $(Y_{z})_{z\in Z}$ as follows. If $z=(v_{1}^{*},\ldots,v_{s}^{*})\in Z$ then
\[Y_{z}=\sum_{i=1}^{n}\sum_{j=1}^{s}g_{i,j}v_{j}^{*}(x_{i}).\]
Then for $z=(v_{1}^{*},\ldots,v_{s}^{*})\in Z$ and $\widetilde{z}=(w_{1}^{*},\ldots,w_{s}^{*})\in Z$, we have
\begin{eqnarray*}
(\mathbb{E}|Y_{z}-Y_{\widetilde{z}}|^{2})^{\frac{1}{2}}&=&\left(\mathbb{E}\left|\sum_{i=1}^{n}\sum_{j=1}^{s}g_{i,j}v_{j}^{*}(x_{i})-\sum_{i=1}^{n}\sum_{j=1}^{s}g_{i,j}w_{j}^{*}(x_{i})\right|^{2}\right)^{\frac{1}{2}}\\&=&
\left(\sum_{i=1}^{n}\sum_{j=1}^{s}|v_{j}^{*}(x_{i})-w_{j}^{*}(x_{i})|^{2}\right)^{\frac{1}{2}}\\&=&
\rho_{Z}((v_{1}^{*},\ldots,v_{s}^{*}),(w_{1}^{*},\ldots,w_{s}^{*}))=\rho(z,\widetilde{z}).
\end{eqnarray*}
So by Talagrand's majorizing measure theorem (\ref{gc}),
\begin{eqnarray*}
\gamma_{2}(Z,\rho_{Z})&\leq&C\cdot\mathbb{E}\sup_{z\in Z}Y_{z}\\&=&
C\cdot\mathbb{E}\sup_{(v_{1}^{*},\ldots,v_{s}^{*})\in Z}\sum_{i=1}^{n}\sum_{j=1}^{s}g_{i,j}v_{j}^{*}(x_{i})\\&=&
C\cdot\mathbb{E}\sum_{j=1}^{s}\sup_{v^{*}\in B_{E^{*}}}\sum_{i=1}^{n}g_{i,j}v^{*}(x_{i})\\&=&
C\cdot\mathbb{E}\sum_{j=1}^{s}\left\|\sum_{i=1}^{n}g_{i,j}x_{i}\right\|=Cs\cdot\mathbb{E}\left\|\sum_{i=1}^{n}g_{i}x_{i}\right\|.
\end{eqnarray*}
So we have bounded the second term in (\ref{split}). Together with the bound (\ref{tbbound}) for the first term, we obtain the following from (\ref{split}).
\[\mathbb{E}\sup_{(f,z)\in T^{(b)}\times Z}X_{f,z}\leq CM\int_{\frac{b}{2}}^{\infty}\sqrt{\log N(T,\|\,\|_{(\delta)},\epsilon)}\,d\epsilon+Cs\cdot\mathbb{E}\left\|\sum_{i=1}^{n}g_{i}x_{i}\right\|.\]
Combining this with (\ref{discretize}), we obtain
\begin{align*}
&\mathbb{E}\sup_{\substack{v_{1}^{*},\ldots,v_{s}^{*}\in B_{E^{*}}\\f\in T}}\left|\frac{1}{n}\sum_{i=1}^{n}g_{i}f(v_{1}^{*}(x_{i}),\ldots,v_{s}^{*}(x_{i}))\right|\\\leq&
C\inf_{b>0}\left(\frac{bM}{\sqrt{n}}+\frac{M}{n}\int_{b}^{\infty}\sqrt{\log N(T,\|\,\|_{(\delta)},\epsilon)}\,d\epsilon\right)+\frac{Cs}{n}\mathbb{E}\left\|\sum_{i=1}^{n}g_{i}x_{i}\right\|\\\leq&
C\inf_{0<b\leq 1}\left(\frac{bM}{\sqrt{n}}+\frac{M}{n}\int_{b}^{1}\sqrt{\left(\frac{C\sqrt{s}}{\epsilon}\right)^{s}\frac{1}{\delta}}\,d\epsilon\right)+\frac{Cs}{n}\mathbb{E}\left\|\sum_{i=1}^{n}g_{i}x_{i}\right\|\\&\text{by Lemma }\ref{fscovering}\\=&
\frac{Cs}{n}\mathbb{E}\left\|\sum_{i=1}^{n}g_{i}x_{i}\right\|+
\frac{CM}{\sqrt{n}}\cdot\inf_{0<b\leq 1}\left(b+\frac{1}{\sqrt{\delta n}}\int_{b}^{1}\sqrt{\left(\frac{C\sqrt{s}}{\epsilon}\right)^{s}}\,d\epsilon\right).
\end{align*}
To complete the proof of the result, it remains to show that
\[\inf_{0<b\leq 1}\left(b+\frac{1}{\sqrt{\delta n}}\int_{b}^{1}\sqrt{\left(\frac{C\sqrt{s}}{\epsilon}\right)^{s}}\,d\epsilon\right)\leq C\sqrt{s}\cdot\begin{cases}(\delta n)^{-\frac{1}{2}},&s=1\\(\ln(\delta n+2))\cdot(\delta n)^{-\frac{1}{2}},&s=2\\(\delta n)^{-\frac{1}{s}},&s\geq 3\end{cases}.\]
When $s=1$, this bound follows by taking $b=0$. When $s=2$, if $\delta n\geq 1$, take $b=(\delta n)^{-\frac{1}{2}}$; and if $\delta n<1$, take $b=1$ and the above bound follows. When $s=3$, if $C\sqrt{s}\cdot(\delta n)^{-\frac{1}{s}}\leq 1$, take $b=C\sqrt{s}\cdot(\delta n)^{-\frac{1}{s}}$; otherwise, take $b=1$ and the above bound follows.
\end{proof}
In the sequel, we define
\begin{equation}\label{Phidef}
\Phi(n,s,\delta)=\begin{cases}(\delta n)^{-\frac{1}{2}},&s=1\\(\ln(\delta n+2))\cdot(\delta n)^{-\frac{1}{2}},&s=2\\(\delta n)^{-\frac{1}{s}},&s\geq 3\end{cases}.
\end{equation}
\begin{lemma}\label{mainlemmascaled}
Let $0<\delta\leq 1$. Suppose that $E$ is a Banach space with separable dual $E^{*}$ and $x_{1},\ldots,x_{n}\in E$. Let $g_{1},\ldots,g_{n}$ be i.i.d.~standard Gaussian random variables. Let $T$ be the set of all $1$-Lipschitz functions $f:\mathbb{R}^{s}\to\mathbb{R}$ with $f(0)=0$. Then
\begin{align}\label{mainlemmascaledeq}
&\mathbb{E}\sup_{\substack{v_{1}^{*},\ldots,v_{s}^{*}\in B_{E^{*}}\\f\in T}}\left|\frac{1}{n}\sum_{i=1}^{n}g_{i}f(v_{1}^{*}(x_{i}),\ldots,v_{s}^{*}(x_{i}))\right|\\\leq&
\frac{Cs}{n}\mathbb{E}\left\|\sum_{i=1}^{n}g_{i}x_{i}\right\|+
Cs\cdot\left(\frac{1}{n}\sup_{v^{*}\in B_{E^{*}}}\sum_{i=1}^{n}|v^{*}(x_{i})|^{2+2\delta}\right)^{\frac{1}{2+2\delta}}\cdot\Phi(n,s,\delta).\nonumber
\end{align}
\end{lemma}
\begin{proof}
Observe that if $f\in T$ and $a>0$, then the map $y\mapsto \frac{1}{a}f(ay)$ from $\mathbb{R}^{s}$ to $\mathbb{R}$ is also in $T$. Thus, for every $a>0$, if we replace $x_{1},\ldots,x_{n}$ by $ax_{1},\ldots,ax_{n}$, then both sides of (\ref{mainlemmascaledeq}) will be multiplied by the same factor $a$ and thus it can be canceled out. So without loss of generality, by scaling $x_{1},\ldots,x_{n}$ by a suitably chosen $a>0$, we may assume that
\[\sup_{v^{*}\in B_{E^{*}}}\sum_{i=1}^{n}|v^{*}(x_{i})|^{2+2\delta}=n\cdot s^{-(1+\delta)}.\]
Then in Lemma \ref{mainlemma},
\[M\leq\sqrt{2}\left[\sqrt{n}+s^{\frac{1+\delta}{2}}\left(\sup_{v^{*}\in B_{E^{*}}}\sum_{i=1}^{n}|v^{*}(x_{i})|^{2+2\delta}\right)^{\frac{1}{2}}\right]=2\sqrt{2}\cdot\sqrt{n}.\]
So by Lemma \ref{mainlemma},
\begin{align*}
&\mathbb{E}\sup_{\substack{v_{1}^{*},\ldots,v_{s}^{*}\in B_{E^{*}}\\f\in T}}\left|\frac{1}{n}\sum_{i=1}^{n}g_{i}f(v_{1}^{*}(x_{i}),\ldots,v_{s}^{*}(x_{i}))\right|\\\leq&
\frac{Cs}{n}\mathbb{E}\left\|\sum_{i=1}^{n}g_{i}x_{i}\right\|+C\sqrt{s}\cdot\Phi(n,s,\delta)\\=&
\frac{Cs}{n}\mathbb{E}\left\|\sum_{i=1}^{n}g_{i}x_{i}\right\|+Cs\cdot\left(\frac{1}{n}\sup_{v^{*}\in B_{E^{*}}}\sum_{i=1}^{n}|v^{*}(x_{i})|^{2+2\delta}\right)^{\frac{1}{2+2\delta}}\cdot\Phi(n,s,\delta),
\end{align*}
since we assume that $\displaystyle\sup_{v^{*}\in B_{E^{*}}}\sum_{i=1}^{n}|v^{*}(x_{i})|^{2+2\delta}=n\cdot s^{-(1+\delta)}$. So the result follows.
\end{proof}
\begin{theorem}\label{mainsampling}
Let $0<\delta\leq 1$. Suppose that $\mu$ is a probability measure on a Banach space $E$ with separable dual $E^{*}$ and $\int_{E}\|x\|\,d\mu(x)<\infty$. Let $X_{1},\ldots,X_{n}$ be i.i.d.~random points in $E$ sampled according to $\mu$. Then
\begin{align*}
&\mathbb{E}W_{1,s}\left(\mu,\frac{1}{n}\sum_{i=1}^{n}\delta_{X_{i}}\right)\\\leq&
\frac{Cs}{n}\mathbb{E}\left\|\sum_{i=1}^{n}g_{i}X_{i}\right\|+
Cs\cdot\mathbb{E}\left[\left(\frac{1}{n}\sup_{v^{*}\in B_{E^{*}}}\sum_{i=1}^{n}|v^{*}(X_{i})|^{2+2\delta}\right)^{\frac{1}{2+2\delta}}\right]\cdot\Phi(n,s,\delta),
\end{align*}
where $g_{1},\ldots,g_{n}$ are i.i.d.~standard Gaussian random variables that are independent of $X_{1},\ldots,X_{n}$, and $\Phi(n,s,\delta)$ is defined in (\ref{Phidef}).
\end{theorem}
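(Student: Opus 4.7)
The idea is that all the technical chaining is already packaged in Lemma \ref{mainlemmascaled}; here one only has to pass from the empirical process on the left-hand side to the Gaussian process that appears in that lemma. First I would use the dual formula for $W_{1,s}$ from Section \ref{defsection} to write
\[W_{1,s}\left(\mu,\tfrac{1}{n}\sum_{i=1}^{n}\delta_{X_{i}}\right)=\sup_{\substack{v_{1}^{*},\ldots,v_{s}^{*}\in B_{E^{*}}\\f\text{ 1-Lipschitz}}}\left|\int_{E}f(v_{1}^{*}(x),\ldots,v_{s}^{*}(x))\,d\mu(x)-\tfrac{1}{n}\sum_{i=1}^{n}f(v_{1}^{*}(X_{i}),\ldots,v_{s}^{*}(X_{i}))\right|.\]
Since replacing $f$ by $f-f(0)$ leaves the difference of the two integrals unchanged, I may restrict the supremum to the class $T$ of 1-Lipschitz functions $f:\mathbb{R}^{s}\to\mathbb{R}$ with $f(0)=0$, which is precisely the index set appearing in Lemma \ref{mainlemmascaled}.

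Next I would perform the standard symmetrization. Writing $F_{v^{*},f}(x)=f(v_{1}^{*}(x),\ldots,v_{s}^{*}(x))$ and introducing an independent ghost sample $X_{1}',\ldots,X_{n}'\sim\mu$ together with i.i.d.\ Rademacher signs $\epsilon_{1},\ldots,\epsilon_{n}$ independent of everything else, Jensen's inequality (to move the supremum past the expectation over the ghost sample) combined with the exchangeability of the pairs $(X_{i},X_{i}')$ gives
\[\mathbb{E}W_{1,s}\left(\mu,\tfrac{1}{n}\sum_{i=1}^{n}\delta_{X_{i}}\right)\leq\mathbb{E}\sup_{v^{*},f\in T}\left|\tfrac{1}{n}\sum_{i=1}^{n}\bigl[F_{v^{*},f}(X_{i}')-F_{v^{*},f}(X_{i})\bigr]\right|\leq 2\,\mathbb{E}\sup_{v^{*},f\in T}\left|\tfrac{1}{n}\sum_{i=1}^{n}\epsilon_{i}F_{v^{*},f}(X_{i})\right|.\]
Because the functional $\Psi(y):=\sup_{v^{*},f\in T}|\tfrac{1}{n}\sum_{i}y_{i}F_{v^{*},f}(X_{i})|$ is convex and positively homogeneous in $y\in\mathbb{R}^{n}$, conditioning on $(|g_{i}|)$ in the identity $g_{i}\stackrel{d}{=}\epsilon_{i}|g_{i}|$ (with $\mathbb{E}|g_{i}|=\sqrt{2/\pi}$) and applying Jensen inside yields the standard Rademacher-to-Gaussian comparison
\[\mathbb{E}\sup_{v^{*},f\in T}\left|\tfrac{1}{n}\sum_{i=1}^{n}\epsilon_{i}F_{v^{*},f}(X_{i})\right|\leq\sqrt{\pi/2}\cdot\mathbb{E}\sup_{v^{*},f\in T}\left|\tfrac{1}{n}\sum_{i=1}^{n}g_{i}F_{v^{*},f}(X_{i})\right|,\]
where $g_{1},\ldots,g_{n}$ are i.i.d.\ standard Gaussians independent of $X_{1},\ldots,X_{n}$.

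To finish I would condition on $X_{1},\ldots,X_{n}$, apply Lemma \ref{mainlemmascaled} with $x_{i}=X_{i}$ to bound the inner conditional expectation, and then take the outer expectation over $X_{1},\ldots,X_{n}$. Absorbing the factor $2\sqrt{\pi/2}$ into the universal constant, this produces exactly the claimed inequality. The one step that might look like the main obstacle, namely the chaining over the huge index set $T\times Z$ via Talagrand's majorizing measure theorem together with the product-metric splitting of Lemma \ref{productgamma}, the Dudley bound of Lemma \ref{Dudleybound}, and the covering estimate of Lemma \ref{fscovering}, has already been overcome inside Lemmas \ref{mainlemma} and \ref{mainlemmascaled}. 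So the only genuine work for this theorem is the routine symmetrization step outlined above.
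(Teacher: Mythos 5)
Your proposal is correct and follows essentially the same route as the paper: rewrite $W_{1,s}$ via the dual formula with $f(0)=0$, symmetrize to a Gaussian-weighted empirical process (the paper compresses your ghost-sample plus Rademacher-to-Gaussian comparison into the single phrase ``by symmetrization''), and then apply Lemma \ref{mainlemmascaled} conditionally on $X_{1},\ldots,X_{n}$. No gaps.
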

\begin{proof}
By the definition of $W_{1,s}$ in Section \ref{defsection},
\begin{align*}
&W_{1,s}\left(\mu,\frac{1}{n}\sum_{i=1}^{n}\delta_{X_{i}}\right)\\=&
\sup_{\substack{v_{1}^{*},\ldots,v_{s}^{*}\in B_{E^{*}}\\f\text{ is 1-Lipschitz}}}\left|\frac{1}{n}\sum_{i=1}^{n}f(v_{1}^{*}(X_{i}),\ldots,v_{s}^{*}(X_{i}))-\int_{E}f(v_{1}^{*}(x),\ldots,v_{s}^{*}(x))\,d\mu(x)\right|\\=&
\sup_{F\in\mathcal{F}}\left(\frac{1}{n}\sum_{i=1}^{n}F(X_{i})-\int F(x)\,d\mu(x)\right),
\end{align*}
where $\mathcal{F}$ consists of all functions $F:E\to\mathbb{R}$ of the form $F(x)=f(v_{1}^{*}(x),\ldots,v_{s}^{*}(x))$ for some $v_{1}^{*},\ldots,v_{s}^{*}\in B_{E^{*}}$ and $1$-Lipschitz functions $f:\mathbb{R}^{s}\to\mathbb{R}$ with $f(0)=0$. But by Gaussian symmetrization (see \cite[Lemma 7.4]{vHnotes}),
\[\mathbb{E}\sup_{F\in\mathcal{F}}\left(\frac{1}{n}\sum_{i=1}^{n}F(X_{i})-\int F(x)\,d\mu(x)\right)\leq\sqrt{2\pi}\cdot\mathbb{E}\sup_{F\in\mathcal{F}}\frac{1}{n}\sum_{i=1}^{n}g_{i}F(X_{i}).\]
Therefore,
\[\mathbb{E}W_{1,s}\left(\mu,\frac{1}{n}\sum_{i=1}^{n}\delta_{X_{i}}\right)\leq
\sqrt{2\pi}\cdot\mathbb{E}\sup_{\substack{v_{1}^{*},\ldots,v_{s}^{*}\in B_{E^{*}}\\f\text{ is 1-Lipschitz}\\f(0)=0}}\frac{1}{n}\sum_{i=1}^{n}g_{i}f(v_{1}^{*}(X_{i}),\ldots,v_{s}^{*}(X_{i})).\]
So by Lemma \ref{mainlemmascaled}, the result follows.
\end{proof}
\begin{corollary}\label{mainsamplinghilbert}
Let $0<\delta\leq 1$. Suppose that $\mu$ is a probability measure on a separable Hilbert space $E$ with $\int_{E}\|x\|^{2+2\delta}\,d\mu(x)<\infty$. Let $X_{1},\ldots,X_{n}$ be i.i.d.~random points in $E$ sampled according to $\mu$. Then
\[\mathbb{E}W_{1,s}\left(\mu,\frac{1}{n}\sum_{i=1}^{n}\delta_{X_{i}}\right)\leq Cs\cdot\left(\int_{E}\|x\|^{2+2\delta}\,d\mu(x)\right)^{\frac{1}{2+2\delta}}\cdot\Phi(n,s,\delta),\]
where $\Phi(n,s,\delta)$ is defined in (\ref{Phidef}).
\end{corollary}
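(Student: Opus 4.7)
The plan is to apply Theorem \ref{mainsampling} to $X_1,\ldots,X_n$ and exploit the Hilbert space structure to reduce both terms on the right-hand side of that theorem to the single scalar moment $\left(\int_{E}\|x\|^{2+2\delta}\,d\mu(x)\right)^{1/(2+2\delta)}$.

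For the weak-moment supremum, I would use Riesz representation to identify every $v^{*}\in B_{E^{*}}$ with a vector $v\in E$ of norm at most one, so that $v^{*}(X_{i})=\langle v,X_{i}\rangle$. Cauchy--Schwarz gives the pointwise bound $|v^{*}(X_{i})|\leq\|X_{i}\|$, hence
\[\sup_{v^{*}\in B_{E^{*}}}\sum_{i=1}^{n}|v^{*}(X_{i})|^{2+2\delta}\;\leq\;\sum_{i=1}^{n}\|X_{i}\|^{2+2\delta}.\]
Jensen's inequality applied to the concave function $t\mapsto t^{1/(2+2\delta)}$ (which is concave since $2+2\delta\geq 1$) then gives
\[\mathbb{E}\left[\left(\frac{1}{n}\sum_{i=1}^{n}\|X_{i}\|^{2+2\delta}\right)^{\frac{1}{2+2\delta}}\right]\;\leq\;\left(\int_{E}\|x\|^{2+2\delta}\,d\mu(x)\right)^{\frac{1}{2+2\delta}},\]
which controls the second term in Theorem \ref{mainsampling}.

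To handle the Gaussian sum term, I would use the Hilbert space identity $\mathbb{E}_{g}\|\sum_{i=1}^{n}g_{i}X_{i}\|^{2}=\sum_{i=1}^{n}\|X_{i}\|^{2}$ (with $X_{1},\ldots,X_{n}$ fixed), followed by Jensen to pass to the full expectation, giving
\[\mathbb{E}\left\|\sum_{i=1}^{n}g_{i}X_{i}\right\|\;\leq\;\sqrt{n\int_{E}\|x\|^{2}\,d\mu(x)}.\]
A further application of Jensen (now with the convex map $t\mapsto t^{1+\delta}$) yields $\left(\int_{E}\|x\|^{2}\,d\mu\right)^{1/2}\leq\left(\int_{E}\|x\|^{2+2\delta}\,d\mu\right)^{1/(2+2\delta)}$, so that the first term in Theorem \ref{mainsampling} is bounded by $\frac{Cs}{\sqrt{n}}\left(\int_{E}\|x\|^{2+2\delta}\,d\mu(x)\right)^{1/(2+2\delta)}$.

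The final step is to check that this is absorbed into $Cs\cdot\left(\int_{E}\|x\|^{2+2\delta}\,d\mu\right)^{1/(2+2\delta)}\cdot\Phi(n,s,\delta)$, i.e.\ that $n^{-1/2}\leq\Phi(n,s,\delta)$ in every case. For $s=1,2$ one has $\Phi(n,s,\delta)\geq(\delta n)^{-1/2}\geq n^{-1/2}$ because $\delta\leq 1$; for $s\geq 3$ one has $\Phi(n,s,\delta)=(\delta n)^{-1/s}\geq n^{-1/s}\geq n^{-1/2}$. Combining the two bounds and adjusting the constant $C$ gives the stated inequality. There is no serious obstacle here: the Hilbert space structure collapses the abstract Banach-space quantities on the right-hand side of Theorem \ref{mainsampling} to the single scalar moment, and the comparison $n^{-1/2}\leq\Phi(n,s,\delta)$ is routine.
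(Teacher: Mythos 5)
Your proposal is correct and follows essentially the same route as the paper: apply Theorem \ref{mainsampling}, bound the Gaussian sum via $\mathbb{E}\|\sum_i g_iX_i\|\leq(\sum_i\mathbb{E}\|X_i\|^2)^{1/2}$, bound the weak-moment supremum by $\sum_i\|X_i\|^{2+2\delta}$ via Cauchy--Schwarz and Jensen, and absorb the first term using $n^{-1/2}\leq C\,\Phi(n,s,\delta)$. The only difference is that you make explicit the Lyapunov comparison $(\int\|x\|^2d\mu)^{1/2}\leq(\int\|x\|^{2+2\delta}d\mu)^{1/(2+2\delta)}$, which the paper leaves implicit.
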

\begin{proof}
In Theorem \ref{mainsampling},
\[\mathbb{E}\left\|\sum_{i=1}^{n}g_{i}X_{i}\right\|\leq
\left(\mathbb{E}\left\|\sum_{i=1}^{n}g_{i}X_{i}\right\|^{2}\right)^{\frac{1}{2}}=
\left(\sum_{i=1}^{n}\mathbb{E}\|X_{i}\|^{2}\right)^{\frac{1}{2}}=\sqrt{n}\left(\int_{E}\|x\|^{2}\,d\mu(x)\right)^{\frac{1}{2}}.\]
We also have
\begin{eqnarray*}
\mathbb{E}\left[\left(\frac{1}{n}\sup_{v^{*}\in B_{E^{*}}}\sum_{i=1}^{n}|v^{*}(X_{i})|^{2+2\delta}\right)^{\frac{1}{2+2\delta}}\right]&\leq&\mathbb{E}\left[\left(\frac{1}{n}\sum_{i=1}^{n}\|X_{i}\|^{2+2\delta}\right)^{\frac{1}{2+2\delta}}\right]\\&\leq&
\left(\int_{E}\|x\|^{2+2\delta}\,d\mu(x)\right)^{\frac{1}{2+2\delta}}.
\end{eqnarray*}
Since $\frac{1}{\sqrt{n}}\leq\Phi(s,\delta,n)$, by Theorem \ref{mainsampling}, the result follows.
\end{proof}
\begin{corollary}\label{mainsamplingbanach}
Suppose that $\mu$ is a probability measure on a Banach space $E$ with separable dual $E^{*}$ and $\int_{E}\|x\|\,d\mu(x)<\infty$. Let $X_{1},\ldots,X_{n}$ be i.i.d.~random points in $E$ sampled according to $\mu$. Then
\[\mathbb{E}W_{1,1}\left(\mu,\frac{1}{n}\sum_{i=1}^{n}\delta_{X_{i}}\right)\leq
\frac{C}{n}\mathbb{E}\left\|\sum_{i=1}^{n}g_{i}X_{i}\right\|+\frac{C\sqrt{\ln n}}{n}\cdot\mathbb{E}\sup_{v^{*}\in B_{E^{*}}}\left(\sum_{i=1}^{n}|v^{*}(X_{i})|^{2}\right)^{\frac{1}{2}},\]
where $g_{1},\ldots,g_{n}$ are i.i.d.~standard Gaussian random variables that are independent of $X_{1},\ldots,X_{n}$.
\end{corollary}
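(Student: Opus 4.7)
The plan is to specialize Theorem \ref{mainsampling} to $s=1$ and to choose the free parameter $\delta\in(0,1]$ of order $1/\ln n$. With $s=1$, one has $\Phi(n,1,\delta)=(\delta n)^{-1/2}$, so Theorem \ref{mainsampling} gives, for every $\delta\in(0,1]$,
\[
\mathbb{E}W_{1,1}\!\left(\mu,\tfrac{1}{n}\textstyle\sum_{i=1}^{n}\delta_{X_i}\right)\le \frac{C}{n}\mathbb{E}\!\left\|\sum_{i=1}^{n} g_iX_i\right\|+\frac{C}{\sqrt{\delta n}}\cdot\mathbb{E}\!\left[\Bigl(\tfrac{1}{n}\sup_{v^*\in B_{E^*}}\sum_{i=1}^{n}|v^*(X_i)|^{2+2\delta}\Bigr)^{\!\!\frac{1}{2+2\delta}}\right].
\]
The target inequality differs only in that the inner $\ell^{2+2\delta}$ supremum is replaced by an $\ell^2$ supremum, and the factor $1/\sqrt{\delta n}$ is replaced by $\sqrt{\ln n}/n$. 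So the remaining work is to interpolate between $\ell^{2+2\delta}$ and $\ell^2$ on the samples and to absorb the cost into a $\sqrt{\ln n}$.

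For the interpolation, set $S:=\sup_{v^*\in B_{E^*}}\bigl(\sum_{i=1}^{n}|v^*(X_i)|^2\bigr)^{1/2}$. The key pointwise observation is
\[
\max_i\|X_i\|\;=\;\sup_{v^*\in B_{E^*}}\max_i|v^*(X_i)|\;\le\;\sup_{v^*\in B_{E^*}}\Bigl(\textstyle\sum_{i=1}^{n}|v^*(X_i)|^2\Bigr)^{1/2}\;=\;S,
\]
so $|v^*(X_i)|^{2+2\delta}\le(\max_j\|X_j\|)^{2\delta}|v^*(X_i)|^2$ yields
\[
\Bigl(\tfrac{1}{n}\sup_{v^*\in B_{E^*}}\textstyle\sum_{i=1}^{n}|v^*(X_i)|^{2+2\delta}\Bigr)^{\!\!\frac{1}{2+2\delta}}\ \le\ S\cdot n^{-\frac{1}{2+2\delta}}.
\]

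It then remains to check that $n^{-1/(2+2\delta)}\cdot(\delta n)^{-1/2}\le C'\sqrt{\ln n}/n$ for a suitable $\delta$. Rewriting the left-hand side as $n^{-1}\cdot n^{\delta/(2(1+\delta))}\cdot\delta^{-1/2}$ and taking $\delta=1/\ln n$ (which lies in $(0,1]$ for $n\ge 3$), one gets $n^{\delta/(2(1+\delta))}\le n^{1/(2\ln n)}=\sqrt e$ and $\delta^{-1/2}=\sqrt{\ln n}$, which is the desired estimate up to a universal constant. The remaining finite set of small $n$ can be absorbed into $C$ using the trivial bounds $W_{1,1}\le 2\,\mathbb{E}\|X_1\|$ (from the triangle inequality) and $\mathbb{E}\|\sum_i g_iX_i\|\ge\sqrt{2/\pi}\,\mathbb{E}\|X_1\|$ (by conditioning on all but one Gaussian and applying Jensen to the zero-mean remainder). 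I do not anticipate a real obstacle: the only content beyond Theorem \ref{mainsampling} is the interpolation $\max_i\|X_i\|\le S$ and the Orlicz-style choice $\delta=1/\ln n$, which is the standard device for turning a $1/\sqrt{\delta}$ blowup into a $\sqrt{\ln n}$ factor.
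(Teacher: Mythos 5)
Your proposal is correct and follows essentially the same route as the paper: specialize Theorem \ref{mainsampling} to $s=1$, bound the $\ell^{2+2\delta}$ supremum by $n^{-1/(2+2\delta)}$ times the $\ell^{2}$ supremum $S$, and take $\delta\asymp 1/\ln n$ so that $n^{-1/(2+2\delta)}\le \sqrt{e}/\sqrt{n}$ and $\delta^{-1/2}=\sqrt{\ln n}$. Your interpolation via $\max_i\|X_i\|\le S$ and your explicit treatment of small $n$ are minor variations on (and slightly more detailed than) the paper's argument, which uses the same choice $\delta=1/\lceil\ln n\rceil$.
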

\begin{proof}
By Theorem \ref{mainsampling} with $s=1$,
\begin{align*}
&\mathbb{E}W_{1,1}\left(\mu,\frac{1}{n}\sum_{i=1}^{n}\delta_{X_{i}}\right)\\\leq&
\frac{C}{n}\mathbb{E}\left\|\sum_{i=1}^{n}g_{i}X_{i}\right\|+\frac{C}{\sqrt{n}}\cdot\inf_{0<\delta\leq 1}\frac{1}{\sqrt{\delta}}\mathbb{E}\left[\left(\frac{1}{n}\sup_{v^{*}\in B_{E^{*}}}\sum_{i=1}^{n}|v^{*}(X_{i})|^{2+2\delta}\right)^{\frac{1}{2+2\delta}}\right]\\\leq&
\frac{C}{n}\mathbb{E}\left\|\sum_{i=1}^{n}g_{i}X_{i}\right\|+\frac{C}{\sqrt{n}}\cdot\inf_{0<\delta\leq 1}\frac{1}{\sqrt{\delta}}\mathbb{E}\left[n^{-\frac{1}{2+2\delta}}\left(\sup_{v^{*}\in B_{E^{*}}}\sum_{i=1}^{n}|v^{*}(X_{i})|^{2}\right)^{\frac{1}{2}}\right].
\end{align*}
Take $\delta=1/\lceil\ln n\rceil$. Then $n^{-\frac{1}{2+2\delta}}\leq\frac{C}{\sqrt{n}}$. The result follows.
\end{proof}
In the rest of this section, we prove some lower bound results. These results are quite standard.
\begin{proposition}\label{lbthm}
Suppose that $\mu$ is a probability measure on a Banach space $E$ with separable dual $E^{*}$ and that $\int_{E}\|x\|\,d\mu(x)<\infty$ and $\int_{E}x\,d\mu(x)=0$. Let $X_{1},\ldots,X_{n}$ be i.i.d.~random points in $E$ sampled according to $\mu$. Then
\[\mathbb{E}W_{1,1}\left(\mu,\frac{1}{n}\sum_{i=1}^{n}\delta_{X_{i}}\right)\geq\frac{1}{2n}\mathbb{E}\left\|\sum_{i=1}^{n}\epsilon_{i}X_{i}\right\|,\]
where $\epsilon_{1},\ldots,\epsilon_{n}$ are i.i.d.~uniform $\pm 1$ random variables that are independent of $X_{1},\ldots,X_{n}$.
\end{proposition}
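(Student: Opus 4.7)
The plan is to combine a one-line test-function lower bound for $W_{1,1}$ with the standard desymmetrization inequality in Banach spaces; both ingredients are classical and this should give a clean two-step proof.

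First I would lower bound $W_{1,1}$ by picking a particular admissible test pair $(v^{*}, f)$ in the Kantorovich-Rubinstein-type formula from Section \ref{defsection}. Taking $f(t) = t$ (which is 1-Lipschitz on $\mathbb{R}$) and letting $v^{*}$ range over $B_{E^{*}}$ gives
\[
W_{1,1}\!\left(\mu, \tfrac{1}{n}\sum_{i=1}^{n}\delta_{X_{i}}\right)
\;\geq\; \sup_{v^{*}\in B_{E^{*}}}\left|\int_{E} v^{*}(x)\,d\mu(x) - \frac{1}{n}\sum_{i=1}^{n} v^{*}(X_{i})\right|.
\]
Using the hypothesis $\int_{E} x\,d\mu(x) = 0$ (so that $\int v^{*}(x)\,d\mu(x) = 0$) and the Hahn-Banach duality $\|y\| = \sup_{v^{*}\in B_{E^{*}}}|v^{*}(y)|$, the right-hand side collapses to $\frac{1}{n}\|\sum_{i=1}^{n} X_{i}\|$. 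Taking expectations yields
\[
\mathbb{E}\,W_{1,1}\!\left(\mu, \tfrac{1}{n}\sum_{i=1}^{n}\delta_{X_{i}}\right) \;\geq\; \frac{1}{n}\,\mathbb{E}\left\|\sum_{i=1}^{n} X_{i}\right\|.
\]

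Second, I would apply the standard desymmetrization inequality in a Banach space, which turns the non-symmetric sum into a Rademacher sum. Let $X_{1}', \ldots, X_{n}'$ be an independent copy of $X_{1}, \ldots, X_{n}$. Since $\mathbb{E}X_{i}' = 0$, Jensen's inequality (conditioning on the $X_{i}$) gives $\mathbb{E}\|\sum \epsilon_{i} X_{i}\| \leq \mathbb{E}\|\sum \epsilon_{i}(X_{i} - X_{i}')\|$; then because $X_{i} - X_{i}'$ is symmetric, $(\epsilon_{i}(X_{i} - X_{i}'))_{i}$ has the same joint law as $(X_{i} - X_{i}')_{i}$, so this equals $\mathbb{E}\|\sum (X_{i} - X_{i}')\|$, which is at most $2\mathbb{E}\|\sum X_{i}\|$ by the triangle inequality. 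Rearranging gives $\mathbb{E}\|\sum X_{i}\| \geq \tfrac{1}{2}\,\mathbb{E}\|\sum \epsilon_{i} X_{i}\|$.

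Chaining the two bounds produces exactly the claimed inequality. There is really no obstacle here: the whole argument is linear in the choice of $f$, and the author's own remark after Theorem \ref{mainintrobanach} indicates this lower bound is meant to be the easy, standard counterpart to the delicate upper bound. The only small point to get right is noting that the identity function is genuinely 1-Lipschitz and that $v^{*}(X_{i})$ is integrable under $\int_{E}\|x\|\,d\mu(x) < \infty$, so the exchange of sup and integration needed for the Hahn-Banach step is legitimate.
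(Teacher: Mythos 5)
Your proposal is correct and follows essentially the same route as the paper: the identity test function $f(t)=t$ combined with Hahn--Banach duality gives $W_{1,1}\geq\frac{1}{n}\|\sum_{i}X_{i}\|$, and the standard desymmetrization (independent copy, symmetry of $X_{i}-X_{i}'$, Jensen, triangle inequality) gives the factor $\frac{1}{2}$ exactly as in the paper's proof. No gaps.
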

\begin{proof}
For fixed $x_{1},\ldots,x_{n}\in E$, by considering the 1-Lipschitz function $f(t)=t$, we have
\[W_{1,1}\left(\mu,\frac{1}{n}\sum_{i=1}^{n}\delta_{x_{i}}\right)\geq\sup_{v^{*}\in B_{E^{*}}}\left|\int_{E}v^{*}(x)\,d\mu(x)-\frac{1}{n}\sum_{i=1}^{n}v^{*}(x_{i})\right|=\left\|\frac{1}{n}\sum_{i=1}^{n}x_{i}\right\|.\]
So
\[\mathbb{E}W_{1,1}\left(\mu,\frac{1}{n}\sum_{i=1}^{n}\delta_{X_{i}}\right)\geq\mathbb{E}\left\|\frac{1}{n}\sum_{i=1}^{n}X_{i}\right\|.\]
Let $Y_{1},\ldots,Y_{n}$ be i.i.d.~random points in $E$ sampled according to $\mu$ that are independent of $X_{1},\ldots,X_{n}$ and $\epsilon_{1},\ldots,\epsilon_{n}$. Then
\[\mathbb{E}\left\|\sum_{i=1}^{n}X_{i}\right\|\geq\frac{1}{2}\mathbb{E}\left\|\sum_{i=1}^{n}(X_{i}-Y_{i})\right\|=
\frac{1}{2}\mathbb{E}\left\|\sum_{i=1}^{n}\epsilon_{i}(X_{i}-Y_{i})\right\|\geq\frac{1}{2}\mathbb{E}\left\|\sum_{i=1}^{n}\epsilon_{i}X_{i}\right\|,\]
where the last inequality follows from Jensen's inequality and taking expectation on $Y_{1},\ldots,Y_{n}$. The result follows.
\end{proof}
\begin{corollary}\label{lbthmhilbert}
Suppose that $\mu$ is a probability measure on a separable Hilbert space $E$ with $\int_{E}\|x\|\,d\mu(x)<\infty$ and $\int_{E}x\,d\mu(x)=0$. Let $X_{1},\ldots,X_{n}$ be i.i.d.~random points in $E$ sampled according to $\mu$. Then
\[\mathbb{E}W_{1,1}\left(\mu,\frac{1}{n}\sum_{i=1}^{n}\delta_{X_{i}}\right)\geq\frac{1}{2\sqrt{2n}}\int_{E}\|x\|\,d\mu(x).\]
\end{corollary}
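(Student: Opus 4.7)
The plan is to derive the corollary directly from Proposition \ref{lbthm} by lower-bounding the Rademacher average $\mathbb{E}\|\sum_{i=1}^n \epsilon_i X_i\|$ in terms of $\int_E \|x\|\, d\mu(x)$. Since Proposition \ref{lbthm} already reduces the claim to showing
\[
\mathbb{E}\left\|\sum_{i=1}^{n}\epsilon_{i}X_{i}\right\|\;\geq\;\sqrt{\tfrac{n}{2}}\int_{E}\|x\|\,d\mu(x),
\]
the remaining work is a pointwise (in the $X_i$) Hilbert-space estimate combined with Cauchy--Schwarz.

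The key step is to condition on $X_1,\ldots,X_n$ and invoke Kahane's $L^{2}$-to-$L^{1}$ inequality for Rademacher sums with the sharp constant $\sqrt{2}$ (valid in any Hilbert space): for fixed $x_1,\ldots,x_n\in E$,
\[
\mathbb{E}_{\epsilon}\left\|\sum_{i=1}^{n}\epsilon_{i}x_{i}\right\|\;\geq\;\frac{1}{\sqrt{2}}\left(\mathbb{E}_{\epsilon}\left\|\sum_{i=1}^{n}\epsilon_{i}x_{i}\right\|^{2}\right)^{1/2}\;=\;\frac{1}{\sqrt{2}}\left(\sum_{i=1}^{n}\|x_{i}\|^{2}\right)^{1/2},
\]
where the equality is the standard orthogonality identity available in Hilbert space. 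Then I would apply the elementary bound $(\sum_{i=1}^{n}a_{i}^{2})^{1/2}\geq\frac{1}{\sqrt{n}}\sum_{i=1}^{n}a_{i}$ (Cauchy--Schwarz) with $a_{i}=\|x_{i}\|$ to get the pointwise inequality
\[
\mathbb{E}_{\epsilon}\left\|\sum_{i=1}^{n}\epsilon_{i}x_{i}\right\|\;\geq\;\frac{1}{\sqrt{2n}}\sum_{i=1}^{n}\|x_{i}\|.
\]

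Substituting $x_i=X_i$, taking the expectation over $X_1,\ldots,X_n$, and using that the $X_i$ are i.i.d.\ with $\mathbb{E}\|X_{1}\|=\int_{E}\|x\|\,d\mu(x)$, I obtain
\[
\mathbb{E}\left\|\sum_{i=1}^{n}\epsilon_{i}X_{i}\right\|\;\geq\;\frac{1}{\sqrt{2n}}\cdot n\int_{E}\|x\|\,d\mu(x)\;=\;\sqrt{\tfrac{n}{2}}\int_{E}\|x\|\,d\mu(x).
\]
Plugging this into the bound from Proposition \ref{lbthm} yields exactly $\frac{1}{2\sqrt{2n}}\int_{E}\|x\|\,d\mu(x)$, as required. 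The only non-routine ingredient is the Hilbert-space Kahane inequality with sharp constant $\sqrt{2}$; the rest is orthogonality and Cauchy--Schwarz. Note that the argument is carried out pointwise in $X_{i}$ and only uses $\mathbb{E}\|X_{1}\|<\infty$, so no second-moment hypothesis is needed.
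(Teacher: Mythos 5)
Your proposal is correct and follows essentially the same route as the paper: reduce to Proposition \ref{lbthm}, apply the Kahane--Khintchine $L^{1}$--$L^{2}$ comparison with constant $\tfrac{1}{\sqrt{2}}$ conditionally on the $X_{i}$, use orthogonality and Cauchy--Schwarz, and then take the full expectation. No issues.
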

\begin{proof}
By Proposition \ref{lbthm}, it suffices to show that
\[\mathbb{E}\left\|\sum_{i=1}^{n}\epsilon_{i}X_{i}\right\|\geq\sqrt{\frac{n}{2}}\cdot\mathbb{E}\|X_{1}\|.\]
If we first take expectation on $\epsilon_{1},\ldots,\epsilon_{n}$, then by the Kahane-Khintchine inequality \cite{Latala}, we have
\[\mathbb{E}_{\epsilon}\left\|\sum_{i=1}^{n}\epsilon_{i}X_{i}\right\|\geq\frac{1}{\sqrt{2}}\left(\mathbb{E}_{\epsilon}\left\|\sum_{i=1}^{n}\epsilon_{i}X_{i}\right\|^{2}\right)^{\frac{1}{2}}=\frac{1}{\sqrt{2}}\left(\sum_{i=1}^{n}\|X_{i}\|^{2}\right)^{\frac{1}{2}}\geq\frac{1}{\sqrt{2n}}\sum_{i=1}^{n}\|X_{i}\|.\]
So
\[\mathbb{E}\left\|\sum_{i=1}^{n}\epsilon_{i}X_{i}\right\|\geq\frac{1}{\sqrt{2n}}\sum_{i=1}^{n}\mathbb{E}\|X_{i}\|=\sqrt{\frac{n}{2}}\cdot\mathbb{E}\|X_{1}\|.\]
\end{proof}
\section{Max-sliced 2-Wasserstein distance}
The following lemma is known. See e.g., \cite{Rudelson}.
\begin{lemma}\label{rmbound}
Let $r>0$. Suppose that $\mu$ is a probability measure on $\mathbb{R}^{d}$ supported on $\{x\in\mathbb{R}^{d}:\,\|x\|_{2}\leq r\}$. Let $X_{1},\ldots,X_{n}$ be i.i.d.~random vectors in $\mathbb{R}^{d}$ sampled according to $\mu$. Let $g_{1},\ldots,g_{n}$ be i.i.d.~standard Gaussian random variables that are independent of $X_{1},\ldots,X_{n}$. Then
\[\mathbb{E}\left\|\sum_{i=1}^{n}X_{i}X_{i}^{T}\right\|_{\mathrm{op}}\leq 2n\|\mathbb{E}X_{1}X_{1}^{T}\|_{\mathrm{op}}+Cr^{2}\ln n,\]
and
\[\mathbb{E}\left\|\sum_{i=1}^{n}g_{i}X_{i}X_{i}^{T}\right\|_{\mathrm{op}}\leq Cr\sqrt{n\ln n}\,\|\mathbb{E}X_{1}X_{1}^{T}\|_{\mathrm{op}}^{\frac{1}{2}}+Cr^{2}\ln n.\]
\end{lemma}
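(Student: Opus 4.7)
The plan is to prove both inequalities via Rudelson's lemma (the noncommutative Khintchine inequality applied to the rank-one matrices $X_iX_i^T$), using symmetrization for the first and conditioning for the second. The key tool is the dimension-free matrix Khintchine bound
\[\mathbb{E}\Big\|\sum_{i=1}^{n}\epsilon_{i}A_{i}\Big\|_{\mathrm{op}}\leq C\sqrt{\ln n}\;\Big\|\sum_{i=1}^{n}A_{i}^{2}\Big\|_{\mathrm{op}}^{\frac{1}{2}},\]
valid for any self-adjoint matrices $A_{1},\ldots,A_{n}$ and $\epsilon_{i}$ i.i.d.\ uniform $\pm 1$ (with the same bound holding for $g_{i}$ in place of $\epsilon_{i}$). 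Here the logarithmic factor depends on $n$, not on the ambient dimension $d$, which is what keeps the final estimates dimension-free.

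For the first inequality, I would write $\Sigma:=\mathbb{E}X_{1}X_{1}^{T}$ and apply the triangle inequality
\[\mathbb{E}\Big\|\sum_{i=1}^{n}X_{i}X_{i}^{T}\Big\|_{\mathrm{op}}\leq n\|\Sigma\|_{\mathrm{op}}+\mathbb{E}\Big\|\sum_{i=1}^{n}(X_{i}X_{i}^{T}-\Sigma)\Big\|_{\mathrm{op}}.\]
By the standard symmetrization inequality the second term is at most $2\,\mathbb{E}\|\sum_{i}\epsilon_{i}X_{i}X_{i}^{T}\|_{\mathrm{op}}$. Conditioning on $X_{1},\ldots,X_{n}$ and applying the matrix Khintchine bound with $A_{i}=X_{i}X_{i}^{T}$ (so $A_{i}^{2}=\|X_{i}\|_{2}^{2}\,X_{i}X_{i}^{T}\preceq r^{2}X_{i}X_{i}^{T}$) gives
\[\mathbb{E}\Big\|\sum_{i=1}^{n}\epsilon_{i}X_{i}X_{i}^{T}\Big\|_{\mathrm{op}}\leq Cr\sqrt{\ln n}\;\mathbb{E}\Big\|\sum_{i=1}^{n}X_{i}X_{i}^{T}\Big\|_{\mathrm{op}}^{\frac{1}{2}}\leq Cr\sqrt{\ln n}\;\Big(\mathbb{E}\Big\|\sum_{i=1}^{n}X_{i}X_{i}^{T}\Big\|_{\mathrm{op}}\Big)^{\frac{1}{2}}.\]
Writing $A:=\mathbb{E}\|\sum_{i}X_{i}X_{i}^{T}\|_{\mathrm{op}}$, this yields $A\leq n\|\Sigma\|_{\mathrm{op}}+Cr\sqrt{\ln n}\,A^{\frac{1}{2}}$. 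Solving this quadratic-type inequality (for instance via the elementary fact $a\leq b+c\sqrt{a}\Rightarrow a\leq 2b+2c^{2}$) produces $A\leq 2n\|\Sigma\|_{\mathrm{op}}+Cr^{2}\ln n$, which is exactly the first claim.

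For the second inequality, I would condition on $X_{1},\ldots,X_{n}$ and apply the Gaussian matrix Khintchine bound to get
\[\mathbb{E}_{g}\Big\|\sum_{i=1}^{n}g_{i}X_{i}X_{i}^{T}\Big\|_{\mathrm{op}}\leq Cr\sqrt{\ln n}\;\Big\|\sum_{i=1}^{n}X_{i}X_{i}^{T}\Big\|_{\mathrm{op}}^{\frac{1}{2}},\]
using again $(X_{i}X_{i}^{T})^{2}\preceq r^{2}X_{i}X_{i}^{T}$. Taking expectation over the $X_{i}$, applying Jensen, and invoking the first inequality already established,
\[\mathbb{E}\Big\|\sum_{i=1}^{n}g_{i}X_{i}X_{i}^{T}\Big\|_{\mathrm{op}}\leq Cr\sqrt{\ln n}\,(2n\|\Sigma\|_{\mathrm{op}}+Cr^{2}\ln n)^{\frac{1}{2}}\leq Cr\sqrt{n\ln n}\,\|\Sigma\|_{\mathrm{op}}^{\frac{1}{2}}+Cr^{2}\ln n,\]
which is the second claim.

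The only nontrivial ingredient is the matrix Khintchine inequality with a $\sqrt{\ln n}$ factor rather than $\sqrt{\ln d}$; this is a classical consequence of the noncommutative Khintchine inequality of Lust-Piquard--Pisier (or Rudelson's original argument via majorizing measures), and there is no genuine obstacle beyond invoking it. Everything else is symmetrization, conditioning, and solving an elementary self-bounding inequality.
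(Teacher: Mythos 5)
Your proof is correct and follows essentially the same route as the paper: symmetrization, the dimension-free (Rudelson-type) Khintchine bound for the rank-one matrices $X_iX_i^T$, the self-bounding inequality $a\leq b+c\sqrt{a}$, and then Jensen plus the first inequality to get the second. One caveat: the ``key tool'' as you state it, namely $\mathbb{E}\big\|\sum_{i}\epsilon_{i}A_{i}\big\|_{\mathrm{op}}\leq C\sqrt{\ln n}\,\big\|\sum_{i}A_{i}^{2}\big\|_{\mathrm{op}}^{1/2}$ for \emph{arbitrary} self-adjoint $A_1,\dots,A_n$, is false in general (e.g.\ with diagonal $A_i$ whose columns run over all $2^n$ normalized sign patterns one gets $\mathbb{E}\|\sum\epsilon_iA_i\|_{\mathrm{op}}=\sqrt{n}$ while $\|\sum A_i^2\|_{\mathrm{op}}^{1/2}=1$); the noncommutative Khintchine inequality yields $\sqrt{\ln d}$, or $\sqrt{\ln(\mathrm{rank})}$, not $\sqrt{\ln n}$. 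Your application is nonetheless valid because $A_i=X_iX_i^T$ is rank one, so $\sum_i A_i^2=\sum_i\|X_i\|_2^2\,X_iX_i^T$ has rank at most $n$ --- this is exactly the point the paper makes explicit by bounding $\mathrm{Tr}[(\sum_i\|x_i\|_2^2x_ix_i^T)^p]\leq n\|\cdots\|_{\mathrm{op}}^p$ and taking $p=\lceil\ln n\rceil$ in the moment form of the noncommutative Khintchine inequality. So you should either restrict the quoted lemma to rank-one (or rank-controlled) summands, or reproduce the short derivation from the Schatten-norm moment bounds as the paper does.
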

\begin{proof}
Fix $x_{1},\ldots,x_{n}\in\mathbb{R}^{d}$ with $\|x_{i}\|_{2}\leq r$ for all $i$. By the noncommutative Khintchine inequality (see, e.g., \cite[Proposition 2.3]{Tropp2ndorder}), for $p\in\mathbb{N}$, one can bound the following expected trace 
\begin{eqnarray*}
\mathbb{E}\,\mathrm{Tr}\left(\sum_{i=1}^{n}g_{i}x_{i}x_{i}^{T}\right)^{2p}&\leq&
(\sqrt{2p})^{2p}\,\mathrm{Tr}\left[\left(\sum_{i=1}^{n}(x_{i}x_{i}^{T})^{2}\right)^{p}\,\right]\\&=&
(\sqrt{2p})^{2p}\,\mathrm{Tr}\left[\left(\sum_{i=1}^{n}\|x_{i}\|_{2}^{2}\,x_{i}x_{i}^{T}\right)^{p}\,\right]\\&\leq&
(\sqrt{2p})^{2p}n\,\left\|\left(\sum_{i=1}^{n}\|x_{i}\|_{2}^{2}\,x_{i}x_{i}^{T}\right)^{p}\,\right\|_{\mathrm{op}}\\&\leq&
(r\sqrt{2p})^{2p}n\left\|\sum_{i=1}^{n}x_{i}x_{i}^{T}\right\|_{\mathrm{op}}^{p},
\end{eqnarray*}
where the second last inequality follows from the fact that $\sum_{i=1}^{n}\|x_{i}\|_{2}^{2}\,x_{i}x_{i}^{T}$ has rank at most $n$. Since the trace of any positive semidefinite matrix is greater than or equal to its operator norm, it follows that
\[\mathbb{E}\left\|\sum_{i=1}^{n}g_{i}x_{i}x_{i}^{T}\right\|_{\mathrm{op}}\leq r\sqrt{2p}\cdot n^{\frac{1}{2p}}\left\|\sum_{i=1}^{n}x_{i}x_{i}^{T}\right\|_{\mathrm{op}}^{\frac{1}{2}}.\]
Taking $p=\lceil\ln n\rceil$, we obtain
\[\mathbb{E}\left\|\sum_{i=1}^{n}g_{i}x_{i}x_{i}^{T}\right\|_{\mathrm{op}}\leq Cr\sqrt{\ln n}\left\|\sum_{i=1}^{n}x_{i}x_{i}^{T}\right\|_{\mathrm{op}}^{\frac{1}{2}}.\]
Now we randomize $x_{1},\ldots,x_{n}$. We get
\begin{equation}\label{rmboundeq}
\mathbb{E}\left\|\sum_{i=1}^{n}g_{i}X_{i}X_{i}^{T}\right\|_{\mathrm{op}}\leq Cr\sqrt{\ln n}\left(\mathbb{E}\left\|\sum_{i=1}^{n}X_{i}X_{i}^{T}\right\|_{\mathrm{op}}\right)^{\frac{1}{2}}.
\end{equation}
By symmetrization (see \cite[Lemma 6.4.2]{Romanbook} and \cite[Exercise 7.1]{vHnotes}),
\[\mathbb{E}\left\|\sum_{i=1}^{n}(X_{i}X_{i}^{T}-\mathbb{E}X_{i}X_{i}^{T})\right\|_{\mathrm{op}}\leq C\cdot\mathbb{E}\left\|\sum_{i=1}^{n}g_{i}X_{i}\right\|.\]
Thus,
\begin{eqnarray*}
\mathbb{E}\left\|\sum_{i=1}^{n}X_{i}X_{i}^{T}\right\|_{\mathrm{op}}&\leq&\|n\mathbb{E}X_{1}X_{1}^{T}\|_{\mathrm{op}}+C\cdot\mathbb{E}\left\|\sum_{i=1}^{n}g_{i}X_{i}X_{i}^{T}\right\|_{\mathrm{op}}\\&\leq&
n\|\mathbb{E}X_{1}X_{1}^{T}\|_{\mathrm{op}}+Cr\sqrt{\ln n}\left(\mathbb{E}\left\|\sum_{i=1}^{n}X_{i}X_{i}^{T}\right\|_{\mathrm{op}}\right)^{\frac{1}{2}},
\end{eqnarray*}
where the last inequality follows from (\ref{rmboundeq}). So
\[\mathbb{E}\left\|\sum_{i=1}^{n}X_{i}X_{i}^{T}\right\|_{\mathrm{op}}\leq 2n\|\mathbb{E}X_{1}X_{1}^{T}\|_{\mathrm{op}}+Cr^{2}\ln n.\]
This proves the first inequality. Combining this with (\ref{rmboundeq}), we obtain the second inequality.
\end{proof}
\begin{lemma}\label{2to1}
Suppose that $\mu,\widetilde{\mu}$ are symmetric probability measures on $(\mathbb{R}^{d},\|\,\|_{2})$ supported on $\{x\in\mathbb{R}^{d}:\,\|x\|_{2}\leq r\}$. Consider the map $\Gamma(x):=xx^{T}$ from the Hilbert space $(\mathbb{R}^{d},\|\,\|_{2})$ to the Banach space $(\mathbb{R}^{d\times d},\|\,\|_{\mathrm{op}})$. Let $\Gamma_{\#}\mu$ and $\Gamma_{\#}\widetilde{\mu}$ be the pushforward measures of $\mu$ and $\widetilde{\mu}$ by $\Gamma$, respectively. Then
\[W_{2,1}(\mu,\widetilde{\mu})^{2}\leq W_{1,1}(\Gamma_{\#}\mu,\Gamma_{\#}\widetilde{\mu}).\]
\end{lemma}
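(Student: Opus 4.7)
The plan is to reduce the claim to a one-dimensional inequality for symmetric measures on $\mathbb{R}$, and then identify the resulting $W_{1}$-expression as a test functional for the Banach-space $W_{1,1}$ on $(\mathbb{R}^{d\times d},\|\,\|_{\mathrm{op}})$.

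\textbf{Step 1 (Reduction to one dimension).} By the definition of $W_{2,1}$,
\[W_{2,1}(\mu_{1},\mu_{2})^{2}=\sup_{\|v\|_{2}=1}W_{2}(v_{\#}\mu_{1},v_{\#}\mu_{2})^{2}.\]
For every unit $v$, the pushforwards $v_{\#}\mu_{1}$ and $v_{\#}\mu_{2}$ are \emph{symmetric} probability measures on $\mathbb{R}$, because $\mu_{1}$ and $\mu_{2}$ are symmetric on $\mathbb{R}^{d}$.

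\textbf{Step 2 (One-dimensional key inequality).} The heart of the proof is the following claim: for any two symmetric probability measures $\nu_{1},\nu_{2}$ on $\mathbb{R}$ with finite second moment,
\[W_{2}(\nu_{1},\nu_{2})^{2}\leq W_{1}(\sigma_{\#}\nu_{1},\sigma_{\#}\nu_{2}),\qquad\sigma(t):=t^{2}.\]
I would prove this via quantile functions. Let $F_{i}^{-1}$ denote the quantile of $\nu_{i}$. Symmetry yields $F_{i}^{-1}(u)=-F_{i}^{-1}(1-u)$, so $F_{i}^{-1}(u)$ has the same sign as $u-\tfrac{1}{2}$. Using the explicit change of variables $v=(1+u)/2$, one checks that the quantile of $\sigma_{\#}\nu_{i}$ equals $F_{i}^{-1}(v)^{2}$ (with $v\in[1/2,1]$), which together with symmetry gives
\[W_{1}(\sigma_{\#}\nu_{1},\sigma_{\#}\nu_{2})=\int_{0}^{1}\bigl|F_{1}^{-1}(u)^{2}-F_{2}^{-1}(u)^{2}\bigr|\,du,\qquad W_{2}(\nu_{1},\nu_{2})^{2}=\int_{0}^{1}\bigl(F_{1}^{-1}(u)-F_{2}^{-1}(u)\bigr)^{2}\,du.\]
The integrand comparison then reduces to the elementary pointwise inequality $(a-b)^{2}\leq|a^{2}-b^{2}|$ whenever $ab\geq 0$, which holds in our setting because $F_{1}^{-1}(u)$ and $F_{2}^{-1}(u)$ share the sign of $u-\tfrac{1}{2}$.

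\textbf{Step 3 (Lifting to the matrix space).} For a unit vector $v\in\mathbb{R}^{d}$, the pushforward $\sigma_{\#}v_{\#}\mu_{i}$ is exactly the pushforward of $\eta_{\#}\mu_{i}$ by the linear functional $A\mapsto v^{T}Av=\mathrm{Tr}(vv^{T}A)$ on $\mathbb{R}^{d\times d}$. The dual of $(\mathbb{R}^{d\times d},\|\,\|_{\mathrm{op}})$ is the nuclear-norm space, and $\|vv^{T}\|_{*}=\|v\|_{2}^{2}=1$, so this functional belongs to $B_{E^{*}}$. Writing $W_{1}$ in its dual Kantorovich-Rubinstein form and restricting the supremum in the definition of $W_{1,1}(\eta_{\#}\mu_{1},\eta_{\#}\mu_{2})$ to this particular $v^{*}=vv^{T}$ yields
\[W_{1}(\sigma_{\#}v_{\#}\mu_{1},\sigma_{\#}v_{\#}\mu_{2})\leq W_{1,1}(\eta_{\#}\mu_{1},\eta_{\#}\mu_{2}).\]
Combining Steps 1-3 and taking the supremum over $\|v\|_{2}=1$ completes the proof.

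\textbf{Main obstacle.} The only nontrivial step is Step 2; the subtlety is that to compare the \emph{squared} $W_{2}$ with the \emph{first-order} $W_{1}$ of the squared pushforwards one must exploit the sign-matching provided by symmetry, without which the pointwise inequality $(a-b)^{2}\leq|a^{2}-b^{2}|$ fails. Steps 1 and 3 are routine once one observes that the rank-one matrix $vv^{T}$ is a unit functional on the operator-norm space.
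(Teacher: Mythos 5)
Your proposal is correct and follows essentially the same route as the paper: reduce to the one-dimensional inequality $W_{2}(\nu_{1},\nu_{2})^{2}\leq W_{1}(\mathrm{sq}_{\#}\nu_{1},\mathrm{sq}_{\#}\nu_{2})$ for symmetric measures via the pointwise bound $(a-b)^{2}\leq|a^{2}-b^{2}|$ for $ab\geq 0$, then lift to $(\mathbb{R}^{d\times d},\|\,\|_{\mathrm{op}})$ by viewing $vv^{T}$ as a unit functional in the trace-class dual. The only (cosmetic) difference is that you realize the sign-matching through the monotone quantile coupling, whereas the paper passes to the $\mathrm{abs}_{\#}$ pushforwards and compares coupling costs on $[0,r]$; both arguments are equivalent in one dimension.
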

\begin{proof}
Define $\mathrm{abs}:\mathbb{R}\to\mathbb{R}$ 
and $\mathrm{sq}:\mathbb{R}\to\mathbb{R}$ by $\mathrm{abs}(t)=|t|$ and $\mathrm{sq}(t)=t^{2}$. Observe that if $\nu$ and $\widetilde{\nu}$ are symmetric probability measures on the interval $[-r,r]$, then
\begin{eqnarray*}
W_{2}(\nu,\widetilde{\nu})^{2}&=&W_{2}(\mathrm{abs}_{\#}\nu,\mathrm{abs}_{\#}\widetilde{\nu})^{2}\\&=&\inf_{\gamma}\int_{[0,r]\times[0,r]}|t-s|^{2}\,d\gamma(t,s)\\&\leq&
\inf_{\gamma}\int_{[0,r]\times[0,r]}|t^{2}-s^{2}|\,d\gamma(t,s)\\&=&
W_{1}(\mathrm{sq}_{\#}\mathrm{abs}_{\#}\nu,\mathrm{sq}_{\#}\mathrm{abs}_{\#}\widetilde{\nu})\\&=&
W_{1}(\mathrm{sq}_{\#}\nu,\mathrm{sq}_{\#}\widetilde{\nu}),
\end{eqnarray*}
where the infimum is over all coupling $\gamma$ of the pushforward measures $\mathrm{abs}_{\#}\nu$ and $\mathrm{abs}_{\#}\widetilde{\nu}$ on $[0,r]$.

For $u\in\mathbb{R}^{d}$ with $\|u\|_{2}=1$, let $u_{\#}\mu$ and $u_{\#}\widetilde{\mu}$ be the pushforward measure of $\mu$ and $\widetilde{\mu}$, respectively, by the map $\langle\cdot,u\rangle$. Taking $\nu=u_{\#}\mu$ and $\widetilde{\nu}=u_{\#}\widetilde{\mu}$ in the above, we obtain
\begin{eqnarray*}
W_{2,1}(\mu,\widetilde{\mu})^{2}&=&\sup_{u\in\mathbb{R}^{d},\,\|u\|_{2}=1}W_{2}(u_{\#}\mu,\,u_{\#}\widetilde{\mu})^{2}\\&\leq&
\sup_{u\in\mathbb{R}^{d},\,\|u\|_{2}=1}W_{1}(\mathrm{sq}_{\#}u_{\#}\mu,\,\mathrm{sq}_{\#}u_{\#}\widetilde{\mu})
\end{eqnarray*}
Observe that $\mathrm{sq}_{\#}u_{\#}\mu$ is the pushforward measure of $\mu$ by the map
\[x\mapsto\langle x,u\rangle^{2}=\mathrm{Tr}(uu^{T}xx^{T})=\mathrm{Tr}(uu^{T}\Gamma(x)),\]
and similarly, $\mathrm{sq}_{\#}u_{\#}\widetilde{\mu}$ is the pushforward measure of $\widetilde{\mu}$ by the same map. Moreover, since the trace class norm of $uu^{T}$ is equal to $1$, we can identify $uu^{T}$ as an element of the unit ball of the dual of the Banach space $(\mathbb{R}^{d\times d},\|\,\|_{\mathrm{op}})$. Thus the result follows.
\end{proof}
Next we prove Theorem \ref{mainsamplingone} in the introduction, where the statement is copied below for the reader's convenience.
\begin{thmmainsamplingone}%\label{mainsamplingonerestate}
Let $r>0$. Suppose that $\mu$ is a symmetric probability measure on $\mathbb{R}^{d}$ supported on $\{x\in\mathbb{R}^{d}:\,\|x\|_{2}\leq r\}$. Let $X_{1},\ldots,X_{n}$ be i.i.d.~random vectors sampled according to $\mu$. Then
\[\mathbb{E}\left[W_{2,1}\left(\mu,\frac{1}{2n}\sum_{i=1}^{n}(\delta_{X_{i}}+\delta_{-X_{i}})\right)^{2}\right]\leq
C\|\Sigma\|_{\mathrm{op}}\left(\frac{r^{2}\ln n}{n\|\Sigma\|_{\mathrm{op}}}+\sqrt{\frac{r^{2}\ln n}{n\|\Sigma\|_{\mathrm{op}}}}\,\right),\]
where $\displaystyle\Sigma=\int_{\mathbb{R}^{d}}xx^{T}\,d\mu(x)$.
\end{thmmainsamplingone}
\begin{proof}
Since $\mu$ and $\frac{1}{2n}\sum_{i=1}^{n}(\delta_{X_{i}}+\delta_{-X_{i}})$ are symmetric, by Lemma \ref{2to1},
\begin{eqnarray}\label{2to1applied}
W_{2,1}\left(\mu,\frac{1}{2n}\sum_{i=1}^{n}(\delta_{X_{i}}+\delta_{-X_{i}})\right)^{2}&\leq& W_{1,1}\left(\Gamma_{\#}\mu,\,\Gamma_{\#}\left[\frac{1}{2n}\sum_{i=1}^{n}(\delta_{X_{i}}+\delta_{-X_{i}})\right]\right)\\&=&
W_{1,1}\left(\Gamma_{\#}\mu,\,\frac{1}{n}\sum_{i=1}^{n}\delta_{\Gamma(X_{i})}\right).\nonumber
\end{eqnarray}
Note that $\Gamma(X_{i})=X_{i}X_{i}^{T}$ are i.i.d.~random matrices with distribution $\Gamma_{\#}\mu$. Taking $E=(\mathbb{R}^{d\times d},\|\,\|_{\mathrm{op}})$ in Corollary \ref{mainsamplingbanach}, we obtain
\begin{align*}
&\mathbb{E}W_{1,1}\left(\Gamma_{\#}\mu,\,\frac{1}{n}\sum_{i=1}^{n}\delta_{\Gamma(X_{i})}\right)\\\leq&
\frac{C}{n}\mathbb{E}\left\|\sum_{i=1}^{n}g_{i}X_{i}X_{i}^{T}\right\|+\frac{C\sqrt{\ln n}}{n}\cdot\mathbb{E}\sup_{V^{*}\in B_{E^{*}}}\left(\sum_{i=1}^{n}|V^{*}(X_{i}X_{i}^{T})|^{2}\right)^{\frac{1}{2}}
\end{align*}
The dual ball $B_{E^{*}}$ consists of all $d\times d$ matrices with trace class norm at most $1$. This coincides with the convex hull of $\{vw^{T}:\,v,w\in\mathbb{R}^{d},\,\|v\|_{2}\leq 1,\,\|w\|_{2}\leq 1\}$ by the singular value decomposition. So
\begin{eqnarray*}
\sup_{V^{*}\in B_{E^{*}}}\left(\sum_{i=1}^{n}|V^{*}(X_{i}X_{i}^{T})|^{2}\right)^{\frac{1}{2}}&=&
\sup_{\substack{v,w\in\mathbb{R}^{d}\\\|v\|_{2},\|w\|_{2}\leq 1}}\left(\sum_{i=1}^{n}|\mathrm{Tr}(vw^{T}X_{i}X_{i}^{T})|^{2}\right)^{\frac{1}{2}}\\&=&
\sup_{\substack{v,w\in\mathbb{R}^{d}\\\|v\|_{2},\|w\|_{2}\leq 1}}\left(\sum_{i=1}^{n}\langle X_{i},v\rangle^{2}\langle X_{i},w\rangle^{2}\right)^{\frac{1}{2}}\\&\leq&
r\sup_{v\in\mathbb{R}^{d},\,\|v\|_{2}=1}\left(\sum_{i=1}^{n}\langle X_{i},v\rangle^{2}\right)^{\frac{1}{2}}=r\left\|\sum_{i=1}^{n}X_{i}X_{i}^{T}\right\|_{\mathrm{op}}^{\frac{1}{2}}.
\end{eqnarray*}
Therefore,
\[\mathbb{E}W_{1,1}\left(\Gamma_{\#}\mu,\,\frac{1}{n}\sum_{i=1}^{n}\delta_{\Gamma(X_{i})}\right)\leq\frac{C}{n}\mathbb{E}\left\|\sum_{i=1}^{n}g_{i}X_{i}X_{i}^{T}\right\|+\frac{Cr\sqrt{\ln n}}{n}\cdot\mathbb{E}\left\|\sum_{i=1}^{n}X_{i}X_{i}^{T}\right\|_{\mathrm{op}}^{\frac{1}{2}}.\]
So by Lemma \ref{rmbound},
\begin{align*}
&\mathbb{E}W_{1,1}\left(\Gamma_{\#}\mu,\,\frac{1}{n}\sum_{i=1}^{n}\delta_{\Gamma(X_{i})}\right)\\\leq&
\frac{Cr\sqrt{\ln n}}{\sqrt{n}}\|\mathbb{E}X_{1}X_{1}^{T}\|_{\mathrm{op}}^{\frac{1}{2}}+\frac{Cr^{2}\ln n}{n}+\frac{Cr\sqrt{\ln n}}{\sqrt{n}}\|\mathbb{E}X_{1}X_{1}^{T}\|_{\mathrm{op}}^{\frac{1}{2}}+\frac{Cr^{2}\ln n}{n}\\=&
\frac{Cr\sqrt{\ln n}}{\sqrt{n}}\|\Sigma\|_{\mathrm{op}}^{\frac{1}{2}}+\frac{Cr^{2}\ln n}{n}.
\end{align*}
So by (\ref{2to1applied}), the result follows.
\end{proof}
Finally, we prove Proposition \ref{mainsamplinglbone} in the introduction, where the statement is copied below.
\begin{propmainsamplinglbone}%\label{mainsamplinglbonerestate}
Let $\Sigma$ be a $d\times d$ positive semidefinite matrix such that $\|\Sigma\|_{\mathrm{op}}\leq\frac{1}{2}\mathrm{Tr}(\Sigma)$. Then there exists a symmetric probability measure $\mu$ on $\mathbb{R}^{d}$ supported on $\{x\in\mathbb{R}^{d}:\,\|x\|_{2}^{2}=\mathrm{Tr}(\Sigma)\}$ such that $\int_{\mathbb{R}^{d}}xx^{T}\,d\mu(x)=\Sigma$ and for every $n\in\mathbb{N}$, 
\begin{equation}\label{mainsamplinglbonerestateeq}
\mathbb{E}\left[W_{2,1}\left(\mu,\frac{1}{2n}\sum_{i=1}^{n}(\delta_{X_{i}}+\delta_{-X_{i}})\right)^{2}\right]\geq
\frac{1}{16}\|\Sigma\|_{\mathrm{op}}\left(\frac{\mathrm{Tr}(\Sigma)}{n\|\Sigma\|_{\mathrm{op}}}+\sqrt{\frac{\mathrm{Tr}(\Sigma)}{n\|\Sigma\|_{\mathrm{op}}}}\,\right),
\end{equation}
where $X_{1},\ldots,X_{n}$ are i.i.d.~random vectors sampled according to $\mu$.
\end{propmainsamplinglbone}
\begin{proof}
Without loss of generality, we may assume that $\Sigma$ is a diagonal matrix with diagonal entries $\lambda_{1}\geq\ldots\geq\lambda_{d}\geq 0$. We may also assume that $\mathrm{Tr}(\Sigma)=\lambda_{1}+\ldots+\lambda_{d}=1$. Let $\{e_{1},\ldots,e_{d}\}$ be the unit vector basis for $\mathbb{R}^{d}$. Take
\[\mu(\{e_{j}\})=\mu(\{-e_{j}\})=\frac{1}{2}\lambda_{j}\quad\text{for }j=1,\ldots,d.\]
Then $\mu$ is symmetric.

We need to show that the left hand side of (\ref{mainsamplinglbonerestateeq}) is at least each of the two terms on the right hand side. So the proof has two parts. The first part of the proof is similar to the proofs of Proposition \ref{lbthm} and Corollary \ref{lbthmhilbert}. Since the max-sliced $2$-Wasserstein distance $W_{2,1}$ is at least the max-sliced $1$-Wasserstein $W_{1,1}$, it suffices to bound $W_{1,1}$ from below. Let $\mathbb{R}_{+}^{d}=\{(v_{1},\ldots,v_{d}):\,v_{1},\ldots,v_{d}\geq 0\}$. By considering the 1-Lipschitz function $f(t)=|t|$, we have
\begin{align*}
&W_{1,1}\left(\mu,\frac{1}{2n}\sum_{i=1}^{n}(\delta_{X_{i}}+\delta_{-X_{i}})\right)\\\geq&
\sup_{v\in\mathbb{R}_{+}^{d},\,\|v\|_{2}=1}\left|\int_{\mathbb{R}^{d}}|\langle x,v\rangle|\,d\mu(x)-\frac{1}{2n}\sum_{i=1}^{n}(|\langle X_{i},v\rangle|+|\langle-X_{i},v\rangle|)\right|\\=&
\sup_{v\in\mathbb{R}_{+}^{d},\,\|v\|_{2}=1}\left|\sum_{i=1}^{d}\lambda_{i}v_{i}-\frac{1}{n}\sum_{i=1}^{n}|\langle X_{i},v\rangle|\right|\\=&
\sup_{v\in\mathbb{R}_{+}^{d},\,\|v\|_{2}=1}\left|\sum_{i=1}^{d}\lambda_{i}v_{i}-\frac{1}{n}\sum_{i=1}^{n}\langle \mathrm{abs}(X_{i}),v\rangle\right|,
\end{align*}
where $\mathrm{abs}(X_{i})$ is the vector for which we take absolute value on each entry of $X_{i}$. (Since $X_{i}$ is distributed according to $\mu$, the vector $X_{i}$ actually has only one nonzero entry.) So
\[W_{1,1}\left(\mu,\frac{1}{2n}\sum_{i=1}^{n}(\delta_{X_{i}}+\delta_{-X_{i}})\right)\geq\frac{1}{2}\left\|\mathrm{diag}(\Sigma)-\frac{1}{n}\sum_{i=1}^{n}\mathrm{abs}(X_{i})\right\|_{2},\]
where $\mathrm{diag}(\Sigma)=(\lambda_{1},\ldots,\lambda_{d})\in\mathbb{R}^{d}$.

Since $X_{1},\ldots,X_{n}$ are i.i.d.~with distribution $\mu$, the random vectors $\mathrm{abs}(X_{1}),\ldots,\mathrm{abs}(X_{n})$ are i.i.d.~with the following distribution
\[\mathrm{abs}_{\#}\mu(\{e_{j}\})=\lambda_{j}\quad\text{for }j=1,\ldots,d.\]
In particular, $\displaystyle\mathbb{E}[\mathrm{abs}(X_{1})]=\sum_{j=1}^{d}\lambda_{j}e_{j}=\mathrm{diag}(\Sigma)$. So
\begin{eqnarray*}
\mathbb{E}\left[W_{1,1}\left(\mu,\frac{1}{2n}\sum_{i=1}^{n}(\delta_{X_{i}}+\delta_{-X_{i}})\right)^{2}\right]&\geq&
\frac{1}{4}\mathbb{E}\left\|\mathrm{diag}(\Sigma)-\frac{1}{n}\sum_{i=1}^{n}\mathrm{abs}(X_{i})\right\|_{2}^{2}\\&=&
\frac{1}{4}\mathbb{E}\left\|\frac{1}{n}\sum_{i=1}^{n}(\mathrm{abs}(X_{i})-\mathbb{E}[\mathrm{abs}(X_{i})])\right\|_{2}^{2}
\\&=&
\frac{1}{4n}\mathbb{E}\left\|\mathrm{abs}(X_{1})-\mathbb{E}[\mathrm{abs}(X_{1})]\right\|_{2}^{2}\\&=&
\frac{1}{4n}\left[\mathbb{E}\|\mathrm{abs}(X_{1})\|_{2}^{2}-\|\mathbb{E}[\mathrm{abs}(X_{1})]\|_{2}^{2}\right]\\&=&
\frac{1}{4n}\left[\mathbb{E}\|\mathrm{abs}(X_{1})\|_{2}^{2}-\|\mathrm{diag}(\Sigma)\|_{2}^{2}\right]\\&=&
\frac{1}{4n}(1-(\lambda_{1}^{2}+\ldots+\lambda_{d}^{2})).
\end{eqnarray*}
Since by assumption $\|\Sigma\|_{\mathrm{op}}\leq\frac{1}{2}\mathrm{Tr}(\Sigma)=\frac{1}{2}$, we have $\lambda_{j}\leq\frac{1}{2}$ for all $j$. So $\lambda_{1}^{2}+\ldots+\lambda_{d}^{2}\leq\frac{1}{2}(\lambda_{1}+\ldots+\lambda_{d})=\frac{1}{2}$. So
\begin{equation}\label{lbfirstpart}
\mathbb{E}\left[W_{1,1}\left(\mu,\frac{1}{2n}\sum_{i=1}^{n}(\delta_{X_{i}}+\delta_{-X_{i}})\right)^{2}\right]\geq\frac{1}{8n}.
\end{equation}
This proves that the left hand side of (\ref{mainsamplinglbonerestateeq}) is at least twice the first term on the right hand side. We now move to the second part of the proof. The second term on the right hand side of (\ref{mainsamplinglbonerestateeq}) is larger than the first term precisely when $\frac{\mathrm{Tr}(\Sigma)}{n\|\Sigma\|_{\mathrm{op}}}<1$, or equivalently, $\frac{1}{n}<\lambda_{1}$. So we may assume this in the rest of the proof.

Consider the pushforward measure $(e_{1})_{\#}\mu$ of $\mu$ by the map $\langle\cdot,e_{1}\rangle$. Note that
\[(e_{1})_{\#}\mu(\{-1\})=\frac{1}{2}\lambda_{1},\quad(e_{1})_{\#}\mu(\{1\})=\frac{1}{2}\lambda_{1},\quad(e_{1})_{\#}\mu(\{0\})=1-\lambda_{1}.\]
We have
\begin{eqnarray}\label{lbsecondpart}
W_{2,1}\left(\mu,\frac{1}{2n}\sum_{i=1}^{n}(\delta_{X_{i}}+\delta_{-X_{i}})\right)&\geq& W_{2}\left((e_{1})_{\#}\mu,\,\frac{1}{2n}\sum_{i=1}^{n}(\delta_{\langle X_{i},e_{1}\rangle}+\delta_{-\langle X_{i},e_{1}\rangle})\right)\\&=&
W_{2}\left(\mathrm{abs}_{\#}(e_{1})_{\#}\mu,\,\frac{1}{n}\sum_{i=1}^{n}\delta_{|\langle X_{i},e_{1}\rangle|}\right),\nonumber
\end{eqnarray}
where $\mathrm{abs}_{\#}(e_{1})_{\#}\mu(\{1\})=\lambda_{1}$ and $\mathrm{abs}_{\#}(e_{1})_{\#}\mu(\{0\})=1-\lambda_{1}$. (See the beginning of the proof of Lemma \ref{2to1}.) Moreover, the random variables $|\langle X_{i},e_{1}\rangle|$, for $i=1,\ldots,d$, are i.i.d.~with this distribution. Thus, the probability measure $\frac{1}{n}\sum_{i=1}^{n}\delta_{|\langle X_{i},e_{1}\rangle|}$ is supported on only two points $0$ and $1$ with the mass at $1$ being $\frac{1}{n}$ times a $\mathrm{binom}(n,\lambda_{1})$ random variable, which we denote by $Y$. So we have
\[W_{2}\left(\mathrm{abs}_{\#}(e_{1})_{\#}\mu,\,\frac{1}{n}\sum_{i=1}^{n}\delta_{|\langle X_{i},e_{1}\rangle|}\right)^{2}=\left|\frac{1}{n}Y-\lambda_{1}\right|.\]
As explained above, we may assume that $\frac{1}{n}\leq\lambda_{1}$. Also by assumption, $\|\Sigma\|_{\mathrm{op}}\leq\frac{1}{2}\mathrm{Tr}(\Sigma)$ so $\lambda_{1}\leq\frac{1}{2}\leq 1-\frac{1}{n}$. Therefore, $\frac{1}{n}\leq\lambda_{1}\leq1-\frac{1}{n}$. With $\lambda_{1}$ in this range, by \cite[Theorem 1]{Berend},
\[\mathbb{E}\left|\frac{1}{n}Y-\lambda_{1}\right|\geq\frac{1}{\sqrt{2}}\left(\mathbb{E}\left|\frac{1}{n}Y-\lambda_{1}\right|^{2}\right)^{\frac{1}{2}}=\frac{\sqrt{n\lambda_{1}(1-\lambda_{1})}}{n\sqrt{2}}=\sqrt{\frac{\lambda_{1}(1-\lambda_{1})}{2n}}\geq\frac{1}{2}\sqrt{\frac{\lambda_{1}}{n}}.\]
This proves that the left hand side of (\ref{mainsamplinglbonerestateeq}) is at least twice the second term on the right hand side. Together with (\ref{lbfirstpart}), this completes the proof.
\end{proof}

\noindent{\bf Acknowledgement:} The author is grateful to Ramon van Handel,\\
Nikita Zhivotovskiy and Sloan Nietert for some useful discussions.

\end{document}